\theoremstyle{plain}
\newtheorem{thm}{Theorem}[section]
\newtheorem{cor}[thm]{Corollary}
\newtheorem{lem}[thm]{Lemma}
\newtheorem{prop}[thm]{Proposition}
\theoremstyle{definition}
\theoremstyle{remark}
\newtheorem{remark}{Remark}
\definecolor{darkseagreen}{rgb}{0.56, 0.74, 0.56}
\definecolor{darkspringgreen}{rgb}{0.09, 0.45, 0.27}
\newcommand{\Vertiii}[1]{{\left\vert\kern-0.25ex\left\vert\kern-0.25ex\left\vert #1 \right\vert\kern-0.25ex\right\vert\kern-0.25ex\right\vert}}
\newcommand{\E}{K}
\newcommand{\F}{F}
\newcommand{\e}{e}
\newcommand{\taun}{\mathcal T_n}
\newcommand{\Fcaln}{\mathcal F_n}
\newcommand{\Fcaltilden}{\widetilde {\mathcal F}_n}
\newcommand{\FcaltildenF}{\Fcaltilden(\F)}
\newcommand{\Ecaln}{\mathcal E_n}
\newcommand{\tautilden}{\widetilde{\mathcal T}_n}
\newcommand{\tautildenE}{\tautilden(\E)}
\newcommand{\h}{h}
\newcommand{\hE}{\h_\E}
\newcommand{\hF}{\h_\F}
\newcommand{\he}{\h_\e}
\newcommand{\Vzface}{\mathbf V_0^{\text{face}}}
\newcommand{\VzfaceE}{\Vzface(\E)}
\newcommand{\VzfaceF}{\Vzface(\F)}
\newcommand{\Vzedge}{\mathbf V_0^\text{edge}}
\newcommand{\VzedgeF}{\Vzedge(\F)}
\newcommand{\VzedgeE}{\Vzedge(\E)}
\newcommand{\psibold}{\boldsymbol \psi}
\newcommand{\phibold}{\boldsymbol \phi}
\newcommand{\psiboldh}{\psibold_\h}
\newcommand{\phiboldh}{\phibold_\h}
\newcommand{\ubold}{\mathbf u}
\newcommand{\vbold}{\mathbf v}
\newcommand{\vboldF}{\mathbf v ^\F}
\newcommand{\vboldhF}{\mathbf {v}_\h ^\F}
\newcommand{\vboldhpartialE}{\mathbf {v}_\h ^{\partial \E}}
\newcommand{\vboldpartialE}{\mathbf v ^{\partial \E}}
\newcommand{\vboldpartialEh}{\mathbf v ^{\partial \E}_\h}
\newcommand{\w}{w}
\newcommand{\wbold}{\mathbf w}
\newcommand{\zbold}{\mathbf z}
\newcommand{\uboldh}{\ubold_\h}
\newcommand{\vboldh}{\vbold_\h}
\newcommand{\wboldh}{\wbold_\h}
\let\div\relax
\DeclareMathOperator{\div}{div}
\DeclareMathOperator{\divF}{div_\F}
\DeclareMathOperator{\curl}{curl}
\DeclareMathOperator{\curlbold}{\bf{curl}}
\DeclareMathOperator{\curlboldF}{{\bf{curl}}_\F}
\DeclareMathOperator{\rot}{rot}
\DeclareMathOperator{\rotF}{rot_\F}
\DeclareMathOperator{\DeltaF}{\Delta_\F}
\DeclareMathOperator{\nablaF}{\nabla_\F}
\newcommand{\Pbb}{\mathbb P}
\newcommand{\Rbb}{\mathbb R}
\newcommand{\Nbb}{\mathbb N}
\newcommand{\n}{\mathbf n}
\newcommand{\nE}{\n_\E}
\newcommand{\nF}{\n_\F}
\renewcommand{\ne}{\n_\e}
\renewcommand{\t}{\boldsymbol \tau}
\newcommand{\te}{\t_\e}
\newcommand{\tpartialF}{{\t_{\partial \F}}}
\newcommand{\EE}{\mathcal F^\E}
\newcommand{\EF}{\mathcal E^\F}
\newcommand{\xbold}{\mathbf x}
\newcommand{\xboldE}{\xbold_\E}
\newcommand{\xboldF}{\xbold_\F}
\newcommand{\xE}{x_\E}
\newcommand{\yE}{y_\E}
\newcommand{\zE}{z_\E}
\newcommand{\xF}{x_\F}
\newcommand{\yF}{y_\F}
\newcommand{\rhobold}{\boldsymbol \rho}
\newcommand{\qbold}{\mathbf q}
\newcommand{\qboldo}{\qbold_1}
\newcommand{\bE}{b_\E}
\newcommand{\bpartialE}{b_{\partial \E}}
\newcommand{\bF}{b_\F}
\newcommand{\psiboldpi}{\psibold_\pi}
\newcommand{\psiboldI}{\psibold_I}
\newcommand{\vboldpi}{\vbold_\pi}
\newcommand{\vboldI}{\vbold_I}
\newcommand{\Pibold}{\boldsymbol \Pi}
\newcommand{\Piboldz}{\Pibold_0}
\newcommand{\Piboldface}{\Pibold_{\text{face}}^0}
\newcommand{\Piboldedge}{\Pibold_{\text{edge}}^0}
\newcommand{\SE}{S^\E}
\newcommand{\SF}{S^\F}
\DeclareMathOperator{\FACE}{face}
\DeclareMathOperator{\EDGE}{edge}
\DeclareMathOperator{\SEface}{\SE_{\FACE}}
\DeclareMathOperator{\SEedge}{\SE_{\EDGE}}
\DeclareMathOperator{\SFface}{\SF_{\FACE}}
\DeclareMathOperator{\SFedge}{\SF_{\EDGE}}
\newcommand{\Ibold}{\mathbf I}
\newcommand{\Phibold}{\boldsymbol \Phi}
\newcommand{\npartialF}{{\n_{\partial \F}}}
\newcommand{\That}{\widehat T}
\newcommand{\what}{\widehat w}
\newcommand{\vboldhat}{\widehat {\vbold}}
\newcommand{\thatehat}{\widehat {\boldsymbol \tau}_{\ehat}}
\newcommand{\ehat}{\widehat e}
\newcommand{\Fhat}{\widehat \F}
\DeclareMathOperator{\rothatFhat}{\widehat{\rot}_{\Fhat}}
\author{
\normalsize{
}}
\date{}
\title{{\textbf{\large{Interpolation and stability properties of low order face and edge virtual element spaces}}}}
\date{}
\author{{L. Beir\~ao da Veiga \thanks{Dip. di Matematica e Applicazioni,  Universit\`a degli Studi di Milano-Bicocca, Italy (lourenco.beirao@unimib.it)},\quad
L. Mascotto\thanks{Dip. di Matematica e Applicazioni,  Universit\`a degli Studi di Milano-Bicocca, Italy (lorenzo.mascotto@unimib.it),
Fakult\"at f\"ur Mathematik, Universit\"at Wien, 1090 Vienna, Austria (lorenzo.mascotto@univie.ac.at)}}}
\begin{document}
\maketitle
\begin{abstract}
\noindent
We analyse the interpolation properties of 2D and 3D low order virtual element face and edge spaces, which generalize N\'ed\'elec and Raviart-Thomas polynomials to polygonal-polyhedral meshes.
Moreover, we investigate the stability properties of the associated $L^2$-discrete bilinear forms, which typically appear in the virtual element discretization of problems in electromagnetism.

\medskip

\noindent
\textbf{AMS subject classification}:  	65N12; 65N15.

\medskip\noindent
\textbf{Keywords}: virtual element methods; polytopal meshes; face and edge spaces; stability.
\end{abstract}

\section{Introduction}  \label{section:introduction}

Conforming $H(\div)$ and $H(\curlbold)$ elements \cite{RaviartThomas1977,nedelec1986,BBF-book} are fundamental in the discretization of a variety of problems, such as diffusion problems in mixed form and electromagnetic equations.
Virtual element (VE) face and edge spaces generalize N\'ed\'elec and Raviart-Thomas polynomials to polygonal-polyhedral meshes.
When coupled with nodal virtual element spaces and piecewise polynomials for the discretization of~$H^1$ and~$L^2$, they constitute a discrete exact complex.
Such VE spaces were first introduced in~\cite{HdivHcurlVEM} and later improved in a series of papers dealing with magnetostatic problems:
\cite{da2017virtual} for the two dimensional case; \cite{da2018lowest, da2018family} for the three dimensional case.

The definition of VE spaces in the above papers goes hand in hand with the introduction of discrete scalar products mimicking the $L^2$ scalar product used to discretize the problem of interest.
Such scalar products and in general discrete bilinear forms are constructed upon two main ingredients: a projection that maps virtual element functions into a polynomial (sub)space and a computable ``stabilization''.

For standard $H^1$-conforming virtual element spaces~\cite{VEMvolley}, there is a vast literature dealing with the interpolation properties of the spaces and the stability properties of the associated discrete bilinear forms;
see, e.g., \cite{VEMchileans, cangianigeorgulispryersutton_VEMaposteriori, brennerVEMsmall, BrennerGuanSung_someestimatesVEM, beiraolovadinarusso_stabilityVEM, chen_anisotropic_conforming}.
Notwithstanding, only limited results regarding the approximation properties of face and edge virtual elements are available.
In~\cite{VEM:acoustic-vibration}, face elements in 2D are analyzed in a non-enhanced setting, simpler than that in~\cite{da2017virtual}, and no result on the stability properties of the discrete scalar products are provided.

The goal of this contribution is to rigorously present (\emph{i}) the interpolation properties of 3D face and edge VE spaces and (\emph{ii}) the stability properties of the associated discrete scalar products.
In order to better underline the challenges and the interesting aspects of the analysis, we focus on the lowest order case introduced in~\cite{da2018lowest}.
Although the extension to the higher order case may hide additional technical difficulties, the path to follow is essentially the same drawn in this paper.
As outlined in the Appendix, the 2D analysis is simpler and follows along the same lines as the 3D one.
The main challenges we have to cope with are related to the virtual nature of the involved spaces, which contain polynomials of a suitable maximum degree, but also other functions that are defined through a PDE problem on the element.
Furthermore, the lack of a reference element, which is an important tool in the FEM analysis, introduces several complications, such as rendering even standard inverse estimates in VE spaces quite challenging.
We indeed believe that some proving techniques introduced in the present article can turn out to have a general value, representing useful tools and ideas for the VEM community. 
For instance, in order to derive the stability of the interpolation operator, we need to combine a Helmholtz decomposition with additional nonstandard manipulations in order to handle the serendipity constraint,
and express the ensuing bound in terms of evaluations that represent the degrees of freedom.
The stability of the scalar product hinges on a critical result that requires to split and re-assemble the error with an ad-hoc argument that we find quite interesting.


The paper is organized as follows.
In Section~\ref{section:preliminaries}, we introduce regular meshes and the related notation. Moreover, we recall a number of technical results in Sobolev spaces.
We address the interpolation properties in 3D face and edge VE spaces in Sections~\ref{section:faceVEM-loworder} and~\ref{section:edge3DVEM-loworder}, respectively.
We analyze the properties of explicit stabilizations for face and edge VE spaces in Section~\ref{section:stabilizations}.
Appendix~\ref{appendix:interpolation-properties-2D} deals with the 2D counterpart of the results presented in the paper.

We conclude this introduction setting the necessary notation and functional setting.
\paragraph*{Notation and functional spaces.}
Given two positive quantities~$a$ and~$b$, we use the short-hand notation ``$a \lesssim b$'' if there exists a positive constant~$c$ independent of the discretization parameters such that ``$a \le c \, b$''.
Moreover, we write ``$a \approx b$'' if and only if ``$a \lesssim b$ and $b\lesssim a$''.

We recall the definition of some differential operators.
Given a face~$\F$ parallel to the $xy$-plane and~$\vbold=(v_1,v_2) : \F \subseteq \Rbb^2 \rightarrow \Rbb^2$ and~$v : \F \subseteq \Rbb^2 \rightarrow \Rbb$, define
\[
\begin{split}
& \divF \vbold: = \partial_x v_1 + \partial_y v_2, \quad\quad \rotF \vbold := -\partial_y v_1 + \partial_x  v_2, \quad \quad \curlboldF v  := (\partial_y v, -\partial _x v)^T.
\end{split}
\]
The above definitions trivially extend to the case of all faces~$\F$.

Furthermore, given~$\vbold = (v_1, v_2, v_3) : \E \subseteq \Rbb^3 \rightarrow \Rbb^3$, we define
\[
\div \vbold := \partial_x v_1 + \partial_y v_2 + \partial_z v_3, \quad \quad \curlbold \vbold := (\partial_y v_3 - \partial_z v_2, \partial _z v_1 - \partial_x v_3, \partial _x v_2 - \partial_y v_1)^T.
\]
Next, we introduce several Sobolev spaces. Given a domain~$D \subset \Rbb^d$, $d=2,3$, we begin by defining the Sobolev space of order~$s>0$ by~$H^s(D)$, which we endow with the norm and seminorm~$\Vert \cdot \Vert_{s,D}$ and~$\vert \cdot \vert_{s,D}$.
The case~$s=0$ boils down to the Lebesgue space~$L^2(D)$. Negative Sobolev spaces are defined via duality.
In particular, for~$\vbold : D\subset \Rbb^{d_1} \to \Rbb ^{d_2}$, $d_1$, $d_2\in \Nbb$, we write
\[
\Vert \vbold \Vert_{-1,D} := \sup_{\Phibold \in [H^1_0(D)]^{d_2}} \frac{(\vbold, \Phibold)_{0,D}}{\vert \Phibold \vert_{1,D}}.
\]
We introduce~$H(\div,D)$ as the space of~$L^2(D)$ functions having weak divergence in~$L^2(D)$.
Analogously, in the~$d=2$ and~$d=3$ cases, we introduce~$H(\rotF,D)$, $H(\curlbold,D)$, and~$H(\curlbold\, \curlbold,D)$ as the spaces of~$[L^2(D)]^2$ and~$[L^2(D)]^3$ functions having weak rotor, $\curlbold$, and~$\curlbold\, \curlbold$ in~$L^2(D)$, $[L^2(D)]^3$, and~$[L^2(D)]^3$, respectively.
Finally, for~$D \subset \Rbb^3$ and~$s>0$, we define~$H^s(\curlbold,D)$ as the subspace of functions~$\vbold \in [H^s(D)]^3$ such that~$\curlbold \vbold \in [H^s(D)]^3$.

\section{Meshes and preliminaries}  \label{section:preliminaries}
Henceforth, let~$\Omega \subset \Rbb^3$ denote a given polyhedral domain and~$\{ \taun \}_{n\in \Nbb}$ be a sequence of polyhedral meshes.
For all~$n \in \Nbb$, we denote the set of faces and edges by~$\Fcaln$ and~$\Ecaln$ respectively.
Moreover, given~$\E \in \taun$, its set of faces is~$\EE$, whereas, given a face~$\F \in \Fcaln$, its set of edges is~$\EF$.
The diameter of an element~$\E \in \taun$ and a face~$\F \in \Fcaln$ are~$\hE$ and~$\hF$, whereas the length of an edge~$\e \in \Ecaln$ is~$\he$.

We denote the outward unit normal vector to the boundary of an element~$\E$ by~$\nE$.
For each face~$\F$ in~$\partial \E$, we fix~$\nF$ as~${\nE}_{|_\F}$.
For future convenience, for every face~$\F$, we also introduce~$\npartialF$ defined as the outward unit normal vector to~$\partial \F$ in the plane containing~$\F$.
Then, for each edge~$\e$ in~$\partial \F$, we fix~$\ne$ as~$\npartialF{}_{|\e}$ in the plane containing~$\F$. 
Further, given an edge~$\e$ of $\partial F$, $\te$ represents the tangent unit vector of~$\e$ pointed in counter-clockwise sense with respect to the couple $(\F,\nF)$.
We introduce~$\tpartialF$ such that~$\tpartialF{}_{|\e} = \te$ for every edge~$\e \in \EF$.

In what follows, $(\xE,\yE,\zE)$ is the barycenter of an element~$\E$, whereas~$(\xF,\yF)$ represents the barycenter of a face~$\F$.

\medskip

Henceforth, we demand that~$\{ \taun \}_{n\in\Nbb}$ satisfies the following assumptions: for all~$n \in \Nbb$,
\begin{itemize}
\item every element~$\E \in \taun$ and face~$\F \in \Fcaln$ 
is star-shaped with respect to a ball of diameter~$\gamma \hE$ and~$\gamma \hF$, respectively;
\item given an element~$\E \in \taun$, for all its faces~$\F \in \EE$, $\hE \approx \hF$; moreover, given a face~$\F \in \Fcaln$, for all its edges~$\e \in \EF$, $\hF \approx \he$;
\item every element~$\E \in \taun$ is Lipschitz.
\end{itemize}

The above couple of assumptions is the accepted standard in the virtual element literature (taking the role of the classical shape-regularity for tetrahedral meshes in finite elements).
Such conditions have been relaxed only in a few papers, see, e.g., \cite{beiraolovadinarusso_stabilityVEM, brennerVEMsmall, chen_anisotropic_conforming}, all dealing with simpler $H^1$-elliptic scalar problems. Note that, as a simple consequence of the conditions above, the number of edges per face and the number of faces per element are uniformly bounded in the mesh family.

We state here some technical results that we shall employ in the forthcoming analysis.
The first one deals with several trace type inequalities.
\begin{lem} \label{lemma:Agmon}
Given~$\E \in \taun$ with size~$\hE$, the following inequality is valid: for all~$v \in H^s(\E)$, $s>1/2$,
\begin{equation} \label{Agmon}
\Vert v \Vert_{0,\partial \E} \lesssim \hE^{-\frac{1}{2}}\Vert v \Vert_{0,\E} +  \hE^{s-\frac{1}{2}} \vert v \vert_{s, \E}.
\end{equation}
Moreover, if additionally~$1/2 < s \le 1$ and~$v$ has zero average on either~$\partial \E$ or~$\E$, then the following bound is valid as well:
\begin{equation} \label{Agmon:zero-average}
\Vert v \Vert_{0,\partial \E} \lesssim \hE^{s-\frac{1}{2}} \vert v \vert_{s, \E}.
\end{equation}
Finally, if~$v \in H^1(\E)$ and has zero average on either~$\partial \E$ or~$\E$, we also have
\begin{equation} \label{multiplicative:trace}
\Vert v \Vert_{0,\partial \E} \lesssim  \Vert v \Vert_{0,\E}^{\frac{1}{2}} \vert v \vert_{1,\E}^{\frac{1}{2}}.
\end{equation}
\end{lem}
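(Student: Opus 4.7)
The strategy is to reduce all three bounds to $\hE$-independent counterparts on a reference-size configuration via a standard scaling argument, and then invoke classical Sobolev space results there. For each $\E \in \taun$, introduce the rescaled copy $\widehat \E := (\E - \xbold_0)/\hE$, where $\xbold_0$ denotes the center of the inscribed ball of diameter $\gamma \hE$ with respect to which $\E$ is star-shaped. Thanks to the regularity assumptions, $\widehat \E$ has diameter $\approx 1$, is star-shaped with respect to a ball of radius $\gamma/2$, and is Lipschitz with a shape constant depending only on $\gamma$; this uniform reference-shape control is what ultimately guarantees that all the constants that will appear below are mesh-independent. The change of variables identities
\[
\Vert v \Vert_{0,\partial \E} = \hE \Vert \widehat v \Vert_{0,\partial \widehat \E}, \qquad
\Vert v \Vert_{0,\E} = \hE^{3/2} \Vert \widehat v \Vert_{0,\widehat \E}, \qquad
\vert v \vert_{s,\E} = \hE^{3/2-s} \vert \widehat v \vert_{s,\widehat \E}
\]
then transport any $\hE$-free bound valid on $\widehat \E$ into the corresponding $\hE$-weighted statement in the lemma.

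For~\eqref{Agmon} I would apply on $\widehat \E$ the continuous trace embedding $H^s(\widehat \E) \hookrightarrow L^2(\partial \widehat \E)$, valid for $s>1/2$ on any Lipschitz domain, and conclude by the scaling identities. For~\eqref{Agmon:zero-average} I would combine this embedding with a Deny--Lions type Poincar\'e--Wirtinger inequality on $\widehat \E$: for $1/2 < s \le 1$ the mean-value functional on $\widehat \E$ (respectively on $\partial \widehat \E$, via the trace map) is continuous on $H^s(\widehat \E)$, so on the associated zero-average subspace the seminorm $\vert \cdot \vert_{s,\widehat \E}$ is equivalent to the full $H^s$-norm. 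The $\Vert \widehat v \Vert_{0,\widehat \E}$ term in the trace bound is thus absorbed into $\vert \widehat v \vert_{s,\widehat \E}$, and scaling back concludes~\eqref{Agmon:zero-average}.

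For the multiplicative trace inequality~\eqref{multiplicative:trace} I would instead work directly on $\E$ by integrating by parts against the radial field $\xbold - \xbold_0$, using $\div(\xbold-\xbold_0)=3$, $\vert \xbold - \xbold_0 \vert \lesssim \hE$ in $\E$, and the star-shape consequence $(\xbold - \xbold_0)\cdot \nE \gtrsim \hE$ on $\partial \E$. The resulting identity and a Cauchy--Schwarz bound give
\[
\Vert v \Vert_{0,\partial \E}^2 \;\lesssim\; \hE^{-1}\Vert v \Vert_{0,\E}^2 + \Vert v \Vert_{0,\E}\,\vert v \vert_{1,\E}.
\]
The zero-average hypothesis is then fed in through the classical Poincar\'e inequality $\Vert v \Vert_{0,\E} \lesssim \hE \vert v \vert_{1,\E}$, which recasts the first term on the right-hand side as $\Vert v \Vert_{0,\E}\vert v \vert_{1,\E}$ as well and closes the estimate after taking a square root. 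The only genuine technical nuisance throughout is the book-keeping of uniformity of the constants (in the trace embedding, in the Deny--Lions equivalence, and in the star-shape lower bound for $(\xbold-\xbold_0)\cdot\nE$), and this is entirely handled by the mesh regularity hypothesis; beyond that, each step is a standard Sobolev-space manipulation.
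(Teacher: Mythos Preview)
Your proposal is correct and follows exactly the route the paper has in mind: the paper's own proof is simply ``Bound~\eqref{multiplicative:trace} is the standard multiplicative trace; see, e.g., \cite[Section~1.6]{BrennerScott}. Bound~\eqref{Agmon} is a standard (scaled) trace inequality, whereas we obtain~\eqref{Agmon:zero-average} by using~\eqref{Agmon} and the Poincar\'e inequality,'' and what you wrote is precisely a fleshed-out version of those three sentences (scaling to a unit-size star-shaped reference for~\eqref{Agmon} and~\eqref{Agmon:zero-average}, and the Brenner--Scott integration-by-parts-against-$\xbold-\xbold_0$ argument for~\eqref{multiplicative:trace}).
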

\begin{proof}
Bound~\eqref{multiplicative:trace} is the standard multiplicative trace; see, e.g., in~\cite[Section~1.6]{BrennerScott}.
Bound~\eqref{Agmon} is a standard (scaled) trace inequality,  whereas we obtain~\eqref{Agmon:zero-average} by using~\eqref{Agmon} and the Poincar\'e inequality.
\end{proof}

Next, we recall the Friedrichs' inequality.
\begin{lem} \label{lemma:Friedrichs-Monk}
Let~$\E \in \taun$ and~$\vbold$ be a divergence free function satisfying~$\vbold{}_{|\partial \E} \times \nE \in [L^2(\partial \E)]^2$. Then, the following bound is valid:
\begin{equation} \label{Friedrichs-Monk}
\Vert \vbold \Vert_{0, \E} \lesssim \hE \Vert \curlbold \vbold \Vert_{0,\E} + \hE^{\frac{1}{2}} \Vert \vbold \times \nE \Vert_{0,\partial \E}.
\end{equation}
\end{lem}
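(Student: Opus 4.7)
The plan is to bound $\Vert\vbold\Vert_{0,\E}^2$ by pairing $\vbold$ against itself and replacing one copy by its curl potential. Since $\div\vbold=0$ on the star-shaped (hence simply connected) Lipschitz domain $\E$, one can construct a vector potential $\wbold\in [H^1(\E)]^3$ with
\[
\curlbold\wbold=\vbold\quad\text{in }\E,\qquad \Vert\wbold\Vert_{0,\E}+\hE\,\vert\wbold\vert_{1,\E}\lesssim \hE\,\Vert\vbold\Vert_{0,\E},
\]
the hidden constant being uniform over the mesh family by the shape-regularity hypotheses. This is a classical curl-analog of the Bogovski\u{\i} construction; it is typically obtained by fixing a reference star-shaped configuration, invoking the Poincaré integral formula (or, equivalently, a compactness/surjectivity argument for $\curlbold$ on the divergence-free subspace with homogeneous normal trace), and then scaling to size $\hE$. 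The three mesh-regularity assumptions are exactly what ensures that the scaling produces a uniform constant.

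With such $\wbold$ in hand, I would apply the integration-by-parts identity
\[
\int_\E \vbold\cdot\curlbold\wbold \,dx \;=\; \int_\E \curlbold\vbold\cdot\wbold \,dx + \int_{\partial\E}(\vbold\times\nE)\cdot\wbold \,d\sigma,
\]
which is justified because $\vbold\in H(\curlbold,\E)$ (finite RHS is the nontrivial case) and the tangential trace $\vbold\times\nE$ is assumed to lie in $L^2(\partial\E)$, so the boundary integral is a genuine $L^2$-pairing. Since $\curlbold\wbold=\vbold$, the left-hand side equals $\Vert\vbold\Vert_{0,\E}^2$, and Cauchy--Schwarz yields
\[
\Vert\vbold\Vert_{0,\E}^2\;\le\;\Vert\curlbold\vbold\Vert_{0,\E}\,\Vert\wbold\Vert_{0,\E}+\Vert\vbold\times\nE\Vert_{0,\partial\E}\,\Vert\wbold\Vert_{0,\partial\E}.
\]

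To control the boundary norm of $\wbold$, I would invoke the trace inequality~\eqref{Agmon} from Lemma~\ref{lemma:Agmon}, together with the potential bounds, obtaining
\[
\Vert\wbold\Vert_{0,\partial\E}\;\lesssim\; \hE^{-\frac12}\Vert\wbold\Vert_{0,\E}+\hE^{\frac12}\vert\wbold\vert_{1,\E}\;\lesssim\;\hE^{\frac12}\,\Vert\vbold\Vert_{0,\E}.
\]
Inserting these estimates into the previous display and dividing by $\Vert\vbold\Vert_{0,\E}$ yields~\eqref{Friedrichs-Monk}.

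The main obstacle, and really the only non-routine step, is the uniform construction of the vector potential $\wbold$: it is the only place where the geometry of $\E$ enters in a delicate way, and it requires the star-shaped/Lipschitz hypothesis to ensure the constant does not blow up across the mesh family. Everything else is integration by parts, Cauchy--Schwarz, and the trace estimate already available in Lemma~\ref{lemma:Agmon}.
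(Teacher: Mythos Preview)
Your argument is correct. Note, however, that the paper does not actually give a proof of this lemma: it simply records that the bound is a scaled version of Corollary~3.51 in Monk's monograph and stops there. So there is no ``paper's approach'' to compare against; your vector-potential argument is a standard and valid route to such Friedrichs-type estimates and is in fact close in spirit to how the cited result is obtained. The one step you rightly identify as delicate---the construction of $\wbold\in[H^1(\E)]^3$ with $\curlbold\wbold=\vbold$ and a constant uniform over the mesh family---is precisely where the star-shapedness assumption enters; the regularized Poincar\'e integral operators on star-shaped domains (or the Costabel--McIntosh right inverses) supply this with constants depending only on the chunkiness parameter~$\gamma$, which is what the mesh hypotheses guarantee. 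In short, your proposal goes beyond the paper by actually sketching the argument, and the sketch is sound.
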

\begin{proof}
This bound is a scaled version of~\cite[Corollary~3.51]{monk2003finite}.
\end{proof}

Further, we recall two polynomial inverse inequalities in polyhedral domains.
\begin{lem} \label{lemma:polynomial-inverse}
Given~$\E \in \taun$, the following polynomial inverse estimate is valid: for all scalar and vector polynomials~$\qbold$ of bounded degree in~$\E$,
\begin{equation} \label{H1-L2}
\vert  \qbold  \vert_{1,\E} \lesssim \hE^{-1} \Vert \qbold \Vert_{0,\E} .
\end{equation}
Set the scaled norm
\begin{equation} \label{H-1/2:norm}
 \Vert \cdot \Vert_{-\frac{1}{2}, \partial \E} := \sup_{\varphi \in H^{\frac12}(\partial \E)} \frac{(\varphi, \cdot)_{0,\partial \E}}{\vert \varphi \vert_{\frac{1}{2},\partial \E} + \hE^{-\frac{1}{2}} \Vert \varphi \Vert_{0,\partial \E}}.
\end{equation}
Then, for every piecewise, i.e., face by face scalar polynomials~$q$ of bounded degree over~$\partial \E$, we also have
\begin{equation} \label{L2-H1/2}
\Vert q \Vert_{0,\partial \E}  \lesssim \hE ^{-\frac{1}{2}}  \Vert q \Vert_{-\frac{1}{2}, \partial \E},
\end{equation}
The estimates above are valid substituting the element~$\E$ with a face~$\F$ as well.
\end{lem}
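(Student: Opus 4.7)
The plan is to treat the two inequalities separately. For~\eqref{H1-L2} I would use a scaling argument: rescale $\E$ to $\widehat\E:=\hE^{-1}\E$, which has unit diameter and, by the star-shaped assumption, contains a ball $\widehat B$ of diameter $\gamma$ while itself being contained in a ball of fixed diameter concentric with $\widehat B$. Since polynomials of bounded degree form a finite-dimensional space, norm equivalence on $\widehat B$ (whose shape is fixed) yields $|\widehat\qbold|_{1,\widehat\E}\lesssim\|\widehat\qbold\|_{0,\widehat B}\le\|\widehat\qbold\|_{0,\widehat\E}$, with a constant depending only on $\gamma$ and the polynomial degree. Unscaling introduces the factor $\hE^{-1}$ and produces~\eqref{H1-L2}.

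For~\eqref{L2-H1/2} the main obstacle is that $q$, being only face-by-face polynomial, is in general discontinuous across the edges of~$\partial\E$ and hence does not lie in $H^{1/2}(\partial\E)$; it cannot be inserted directly as a test function in the supremum~\eqref{H-1/2:norm}. I would circumvent this with a regularization-duality argument. Write
\[
\|q\|_{0,\partial\E}^2 = (q-\widetilde q,q)_{0,\partial\E} + (\widetilde q,q)_{0,\partial\E},
\]
where $\widetilde q\in H^{1/2}(\partial\E)$ is constructed so that
\[
\|q-\widetilde q\|_{0,\partial\E}\le \tfrac12\|q\|_{0,\partial\E},\qquad
|\widetilde q|_{\frac12,\partial\E}+\hE^{-\frac12}\|\widetilde q\|_{0,\partial\E}\lesssim \hE^{-\frac12}\|q\|_{0,\partial\E}.
\]
Bounding the first term by Cauchy-Schwarz and the second by the definition~\eqref{H-1/2:norm} of $\|q\|_{-\frac12,\partial\E}$, then absorbing $\|q\|_{0,\partial\E}^2$ on the left, yields~\eqref{L2-H1/2}.

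To build $\widetilde q$, on each face $\F\in\EE$ I would set $\widetilde q|_\F:=\chi_\F\,q|_\F$, where $\chi_\F$ is a smooth cutoff that equals one outside a tubular strip of width $\delta\hF$ around~$\partial\F$, vanishes on~$\partial\F$, and satisfies $\|\nabla\chi_\F\|_\infty\lesssim(\delta\hF)^{-1}$. Because $\widetilde q$ vanishes on every edge of~$\partial\E$, it is continuous across faces and thus belongs to $H^{1/2}(\partial\E)$. A Markov-type estimate for polynomials on the tubular strip gives $\|q-\widetilde q\|_{0,\F}^2\lesssim\delta\|q\|_{0,\F}^2$, so choosing $\delta$ small enough yields the first bound. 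For the $H^{1/2}$-seminorm I would split the Slobodeckij double integral over $\partial\E\times\partial\E$ into same-face and cross-face contributions: the same-face part is controlled via the interpolation estimate $|\widetilde q|_{\frac12,\F}^2\lesssim\|\widetilde q\|_{0,\F}\,|\widetilde q|_{1,\F}$ together with~\eqref{H1-L2} and the cutoff derivative scaling, returning $\hF^{-1}\|q\|_{0,\F}^2$; the cross-face part is handled by using that the supports of $\widetilde q$ on two different faces are separated by a distance $\gtrsim \hF$, so the kernel $|x-y|^{-3}$ integrates over the opposite face to $O(\hF^{-1})$. Summing over the uniformly bounded number of faces produces the required $H^{1/2}$ bound. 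The main technical challenge is precisely this cross-face estimate, where the closed-surface geometry of $\partial\E$ has to be accounted for with care. The face-level counterpart (with $\F$ replacing $\E$) follows by the same arguments in one dimension less.
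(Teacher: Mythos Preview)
Your argument is essentially correct, but the paper takes a noticeably cleaner route, especially for~\eqref{L2-H1/2}.

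For~\eqref{H1-L2}, the paper does not rescale; instead it decomposes~$\E$ into a uniformly shape-regular sub-tessellation~$\tautildenE$ of tetrahedra (this exists by the star-shapedness assumptions) and applies the standard inverse estimate tetrahedron by tetrahedron. Your scaling-plus-norm-equivalence argument is equally valid and perhaps more direct; the paper's sub-tessellation, however, is reused later.

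For~\eqref{L2-H1/2}, the paper's regularizer is not a smooth cutoff but the \emph{piecewise cubic bubble}~$\bF$ on the induced sub-triangulation~$\FcaltildenF$ of each face, glued into~$\bpartialE$ on~$\partial\E$. Because~$\bpartialE q$ is piecewise polynomial of bounded degree on a shape-regular triangulation, the two key facts
\[
\Vert \bpartialE q\Vert_{0,\partial\E}\approx\Vert \bpartialE^{1/2}q\Vert_{0,\partial\E}\approx\Vert q\Vert_{0,\partial\E},
\qquad
\vert \bpartialE q\vert_{1,\partial\E}\lesssim \hE^{-1}\Vert \bpartialE q\Vert_{0,\partial\E}
\]
follow immediately from standard bubble/polynomial inverse estimates on triangles, with no strip-width parameter~$\delta$ to tune. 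The decisive simplification is that the paper then invokes \emph{operator interpolation on the whole surface}~$\partial\E$ between~$L^2(\partial\E)$ and~$H^1(\partial\E)$ to obtain $\vert \bpartialE q\vert_{1/2,\partial\E}\lesssim \hE^{-1/2}\Vert \bpartialE q\Vert_{0,\partial\E}$, and plugs $\varphi=\bpartialE q$ directly into~\eqref{H-1/2:norm}. Since~$\bpartialE q$ vanishes on all edges it lies in~$H^1(\partial\E)$, so the cross-face Slobodeckij contributions never have to be estimated by hand. In your approach the same shortcut is in fact available: your~$\widetilde q$ also vanishes on the skeleton and therefore belongs to~$H^1(\partial\E)$, so interpolating on all of~$\partial\E$ (rather than face by face) would spare you the cross-face analysis and the attendant dihedral-angle issues you flag as ``the main technical challenge''. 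What the bubble choice buys over the smooth cutoff is that the~$L^2$ equivalence and the~$H^1$ inverse estimate are one-line consequences of polynomial arguments, whereas your route needs an~$L^\infty$--$L^2$ inverse inequality to control the strip mass and a product-rule estimate involving~$\Vert\nabla\chi_\F\Vert_\infty$.
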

\begin{proof}
Thanks to the regularity assumptions on the mesh in Section~\ref{section:preliminaries}, 
each element~$\E \in \taun$ can be split into a sub-tessellation~$\tautildenE$ consisting of a uniformly bounded set of shape-regular tetrahedra.
Clearly, $\tautilden(\E)$ induces a shape-regular sub-triangulation~$\FcaltildenF$ on each face~$\F$ of~$\E$.

As for~\eqref{H1-L2}, it suffices to apply a standard polynomial inverse estimate over each tetrahedron of~$\tautildenE$;
see, e.g., \cite[Proposition~3.37]{verfurth2013posteriori} and the references therein.

As for~\eqref{L2-H1/2}, on each face~$\F \in \EE$, we introduce~$\bF$, which is defined as the piecewise cubic bubble function over~$\FcaltildenF$ such that~$\Vert \bF \Vert_{\infty, T} = 1$ for all~$T$ in~$\FcaltildenF$.
Also define a function~$\bpartialE$ over~$\partial \E$ satisfying~$\bpartialE{}_{|\F}=\bF$.
A standard bubble argument triangle by triangle, recalling that $q$ is a polynomial for all faces~$\F \in \EE$, yields
$$
\Vert \bF\, q \Vert_{0,\F}  \approx \Vert \bF^{1/2} \, q \Vert_{0,\F}  \approx \Vert q \Vert_{0,\F} .
$$
For the same reason, a polynomial inverse estimate gives
\[
\vert \bF \, q \vert_{1,\F} \lesssim \hE^{-1} \Vert \bF \, q \Vert_{0,\F} .
\]
The above inequalities trivially imply
\[
\Vert \bpartialE\, q \Vert_{0,\partial\E}  \approx 
\Vert \bpartialE^{1/2} \, q \Vert_{0,\partial\E} \approx \Vert q \Vert_{0,\partial\E} \ , \qquad
\vert \bpartialE \, q \vert_{1,\partial\E} \lesssim \hE^{-1} \Vert \bpartialE \, q \Vert_{0,\partial\E}.
\]
In turn, interpolation theory~\cite{Triebel} yields
\begin{equation} \label{second-step-inverse1D}
\vert \bpartialE \, q \vert_{\frac{1}{2}, \partial\E} \lesssim \hE^{-\frac12} \Vert \bpartialE \, q \Vert_{0,\partial\E}.
\end{equation}
With this at hand, we can write
\[
\Vert q \Vert_{-\frac{1}{2}, \partial \E} 
= \sup_{\varphi \in H^{\frac{1}{2}}(\partial \E)} \frac{(\varphi,  q)_{0, \partial \E}}{\vert \varphi \vert_{\frac{1}{2}, \partial \E} + \hE^{-\frac12} \Vert \varphi \Vert_{0, \partial \E}  }
\ge \frac{\Vert \bpartialE^{\frac{1}{2}} q \Vert^2_{0, \partial \E}}{\vert \bpartialE \, q \vert_{\frac{1}{2}, \partial \E} + \hE^{-\frac12} \Vert \bpartialE\, q \Vert_{0, \partial \E}} 
\overset{\eqref{second-step-inverse1D}}{\gtrsim} \hE^{\frac{1}{2}} \Vert q \Vert_{0, \partial \E},
\]
which is the assertion.
\end{proof}

Finally, we recall the $\div$- and $\curlbold$-trace inequalities.

\begin{lem} \label{lemma:curl-div-trace}
Given~$\E \in \taun$, let~$\psibold \in H(\div, \E)$ and~$\vbold \in H(\curlbold, \E)$. Then, the two following trace inequalities are valid:
\begin{equation} \label{div-trace}
\Vert \psibold \cdot \nE \Vert_{-\frac{1}{2}, \partial \E} \lesssim \Vert \psibold \Vert_{0,\E} + \hE \Vert \div \psibold \Vert_{0,\E}
\end{equation}
and
\begin{equation} \label{curl-trace}
\Vert \vbold \times \nE \Vert_{-\frac{1}{2}, \partial \E} \lesssim \Vert \vbold \Vert_{0,\E} + \hE \Vert \curlbold \vbold \Vert_{0,\E}.
\end{equation}
Moreover, given a face~$\F$, let~$\vbold \in H(\rotF, \F)$ and ${\bf w} \in H(\divF, \F)$.
Then, the following trace inequalities are valid as well:
\begin{eqnarray} \label{rot-trace}
& \Vert \vbold \cdot \tpartialF \Vert_{-\frac{1}{2}, \partial \F} \lesssim \Vert \vbold \Vert_{0,\F} + \hF \Vert \rotF \vbold \Vert_{0,\F} , \\
\label{added:1}
& \Vert {\bf w} \cdot \npartialF \Vert_{-\frac{1}{2}, \partial \F} \lesssim \Vert {\bf w} \Vert_{0,\F} + \hF \Vert \divF {\bf w} \Vert_{0,\F}. 
\end{eqnarray}
\end{lem}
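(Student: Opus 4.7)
All four inequalities follow the same template: a suitable Green's identity turns a duality pairing on the boundary into a volume integral, which is then estimated via Cauchy--Schwarz using a properly scaled $H^1$-lifting of the $H^{\frac{1}{2}}$-test function appearing in the definition~\eqref{H-1/2:norm} of the negative order norm. The common preliminary step, which I discuss in the last paragraph, is the construction of such a lifting with size-dependent bounds.

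I would start with the divergence trace~\eqref{div-trace}. Given any~$\varphi \in H^{\frac{1}{2}}(\partial \E)$, I would construct a lifting~$\widetilde \varphi \in H^1(\E)$ satisfying $\widetilde \varphi{}_{|\partial \E} = \varphi$ together with
\begin{equation} \label{lifting-bound}
\hE^{-1}\Vert \widetilde \varphi \Vert_{0,\E} + \vert \widetilde \varphi \vert_{1,\E} \lesssim \vert \varphi \vert_{\frac{1}{2},\partial \E} + \hE^{-\frac{1}{2}} \Vert \varphi \Vert_{0,\partial \E}.
\end{equation}
Integrating by parts gives
\[
(\varphi, \psibold \cdot \nE)_{0,\partial \E} = (\nabla \widetilde \varphi, \psibold)_{0,\E} + (\widetilde \varphi, \div \psibold)_{0,\E} \lesssim \vert \widetilde \varphi \vert_{1,\E} \Vert \psibold \Vert_{0,\E} + \Vert \widetilde \varphi \Vert_{0,\E} \Vert \div \psibold \Vert_{0,\E}.
\]
Dividing by $\vert \varphi \vert_{\frac{1}{2},\partial \E} + \hE^{-\frac{1}{2}} \Vert \varphi \Vert_{0,\partial \E}$, using~\eqref{lifting-bound}, and taking the supremum over~$\varphi$ yields~\eqref{div-trace}.

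The remaining bounds follow from exactly the same recipe with the appropriate Green identity. For~\eqref{curl-trace}, I would take a vector-valued lifting~$\widetilde \Phibold \in [H^1(\E)]^3$ of~$\Phibold \in [H^{\frac{1}{2}}(\partial \E)]^3$ satisfying the analogue of~\eqref{lifting-bound} component by component, and use
\[
(\vbold \times \nE, \Phibold)_{0,\partial \E} = (\curlbold \vbold, \widetilde \Phibold)_{0,\E} - (\vbold, \curlbold \widetilde \Phibold)_{0,\E},
\]
together with $\Vert \curlbold \widetilde \Phibold \Vert_{0,\E} \lesssim \vert \widetilde \Phibold \vert_{1,\E}$. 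For the face versions~\eqref{rot-trace} and~\eqref{added:1}, the domain is the two dimensional face~$\F$; I would lift~$\psi \in H^{\frac{1}{2}}(\partial \F)$ to~$\widetilde \psi \in H^1(\F)$ with a $\hF$-scaled analogue of~\eqref{lifting-bound} and employ the identities
\[
(\vbold \cdot \tpartialF, \psi)_{0,\partial \F} = (\rotF \vbold, \widetilde \psi)_{0,\F} - (\vbold, \curlboldF \widetilde \psi)_{0,\F}, \qquad ({\bf w} \cdot \npartialF, \psi)_{0,\partial \F} = (\divF {\bf w}, \widetilde \psi)_{0,\F} + ({\bf w}, \nablaF \widetilde \psi)_{0,\F},
\]
respectively, closing with Cauchy--Schwarz as above.

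The only genuinely nontrivial point is the existence of the scaled lifting with bound~\eqref{lifting-bound} on a polyhedral (or polygonal) element that has no reference configuration. I would obtain it by exploiting the mesh assumptions: since~$\E$ is Lipschitz and star-shaped with respect to a ball of diameter~$\gamma \hE$, a bi-Lipschitz change of variables maps it to a domain of unit diameter with constants depending only on~$\gamma$; on the unit-sized image one applies the classical bounded right inverse of the trace operator~$H^{\frac{1}{2}}(\partial D) \to H^1(D)$, and the stated $\hE$-weighted bound is recovered by a standard scaling argument. The same argument applies to the face~$\F$, which shares the shape-regularity hypotheses. This is the sole obstacle; once~\eqref{lifting-bound} is in hand, the four trace inequalities are immediate from the relevant Green's formulae.
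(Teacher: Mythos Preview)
Your plan is correct. The paper does not give an independent argument for this lemma; it simply refers the reader to a standard reference (Monk's monograph), and what you have sketched is precisely the classical proof found there: a Green's identity paired with a scaled $H^1$ lifting of the $H^{\frac12}$ test function, followed by Cauchy--Schwarz. The only point worth double-checking in your write-up is the sign convention in the curl integration-by-parts formula, which of course is irrelevant for the norm estimate.
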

\begin{proof}
See, e.g., \cite[Section~3.5]{monk2003finite}.
\end{proof}

\begin{remark}
All the forthcoming interpolation estimates are local to the element and immediately yield the associated global results by summation on all mesh elements.
Analogously, we do not detail here the (conforming) global spaces associated to the local ones described here;
the definition of such global spaces, which  trivially follows by a standard FEM/VEM gluing procedure, can be found in~\cite{da2018lowest}.
\end{remark}

\section{Interpolation properties of 3D face spaces}  \label{section:faceVEM-loworder}

Let~$\E \in \taun$ be an element. Following~\cite{da2018lowest}, we define the low order local face virtual element space in 3D as follows:
\begin{equation} \label{definition:Vface}
\begin{split}
\VzfaceE := \{ \psiboldh 	& \in [L^2(\E)]^3 \mid  \div \psiboldh \in \Pbb_0(\E),\, \curlbold \psiboldh \in [\Pbb_0(\E)]^3,    \\ 
					& \psiboldh{}_{|\F} \cdot \nF \in \Pbb_0(\F) \, \forall \F \in \EE , \, \int_\E \psiboldh \cdot (\xboldE \times q_0)=0 \;\; \forall q_0 \in [\Pbb_0(\E)]^3  \},
\end{split}
\end{equation}
where we have set
\begin{equation} \label{def:xboldE}
\xboldE := (x-\xE, y - \yE, z - \zE)^T.
\end{equation}
In what follows, \emph{enhancing constraint} refers to the constraint
\begin{equation} \label{enhancing:face}
 \int_\E \psiboldh \cdot (\xboldE \times \qbold_0)=0 \quad \quad \forall \qbold_0 \in [\Pbb_0(\E)]^3.
\end{equation}
We endow the local space~$\VzfaceE$ with a set of unisolvent degrees of freedom, which is provided by the single value of~$\psiboldh \cdot \nF$ over each face~$\F \in \EE$.
In the following lemma, we show that such normal component values uniformly control the $L^2$ norm 
of functions in~$\VzfaceE$. 
\begin{prop} \label{prop:splitting:VEM-face}
The following a priori bound is valid for any~$\psiboldh \in \VzfaceE$:
\begin{equation} \label{a-priori:bound:psiboldh}
\Vert \psiboldh  \Vert_{0,\E} \lesssim \hE^{\frac{1}{2}}  \Vert \psiboldh \cdot \nE \Vert_{0,\partial \E}.
\end{equation}
\end{prop}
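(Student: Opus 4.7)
The plan is a Helmholtz-type decomposition adapted to the degrees of freedom, combined with the enhancing constraint to control the part of $\psiboldh$ not directly visible from the normal traces. Let $c_0 := \div \psiboldh \in \Rbb$ and $\qbold_0 := \curlbold \psiboldh \in \Rbb^3$; let $\alpha \in H^1(\E)$ be the zero-mean solution of the Neumann problem $\Delta \alpha = c_0$ in $\E$, $\nabla \alpha \cdot \nE = \psiboldh \cdot \nE$ on $\partial \E$ (the compatibility condition follows from $\int_{\partial \E} \psiboldh \cdot \nE = c_0|\E|$), and set $\wbold := \psiboldh - \nabla \alpha$. Then $\div \wbold = 0$, $\wbold \cdot \nE = 0$ on $\partial \E$, and $\curlbold \wbold = \qbold_0$. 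Integration by parts gives $(\nabla \alpha, \wbold)_{0,\E} = 0$, hence the Pythagorean identity $\|\psiboldh\|_{0,\E}^2 = \|\nabla \alpha\|_{0,\E}^2 + \|\wbold\|_{0,\E}^2$.

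The gradient part $\nabla \alpha$ is easily controlled: integrating $(\nabla \alpha, \psiboldh)_{0,\E}$ by parts and using the zero-mean condition on $\alpha$ (so that the volume integral $c_0 \int_\E \alpha$ vanishes), one obtains $\|\nabla \alpha\|^2_{0,\E} = \int_{\partial \E}\alpha\,(\psiboldh \cdot \nE)$. Cauchy-Schwarz together with the trace inequality~\eqref{Agmon:zero-average} yields $\|\alpha\|_{0,\partial \E} \lesssim \hE^{1/2}\|\nabla \alpha\|_{0,\E}$, whence $\|\nabla \alpha\|_{0,\E} \lesssim \hE^{1/2}\|\psiboldh \cdot \nE\|_{0,\partial \E}$.

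For the remainder I would exploit the enhancing constraint $(\psiboldh, \qbold_0 \times \xboldE)_{0,\E} = 0$. Decompose further $\wbold = \wbold_* + \nabla \eta$, where $\wbold_* := \tfrac{1}{2}\qbold_0 \times \xboldE$ is a polynomial with $\curlbold \wbold_* = \qbold_0$ and $\div \wbold_* = 0$, and $\eta$ is the zero-mean harmonic function on $\E$ with $\nabla \eta \cdot \nE = -\wbold_* \cdot \nE$ on $\partial \E$. A sequence of integrations by parts using $\Delta \eta = 0$, the Neumann condition, and $\div \wbold_* = 0$ yields the non-obvious identity $(\wbold, \qbold_0 \times \xboldE)_{0,\E} = 2\|\wbold\|_{0,\E}^2$, which plugged into the enhancing constraint produces the key identity
\[
2\|\wbold\|_{0,\E}^2 = -(\nabla \alpha, \qbold_0 \times \xboldE)_{0,\E} = -\int_{\partial \E} \alpha\,(\qbold_0 \times \xboldE)\cdot \nE.
\]

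Combining Cauchy-Schwarz with the $H^{-1/2}$ divergence-trace bound of Lemma~\ref{lemma:curl-div-trace} applied to $\qbold_0 \times \xboldE$ (which is divergence-free) and the trace/Poincar\'e bound on $\alpha$ gives $\|\wbold\|_{0,\E}^2 \lesssim \|\nabla \alpha\|_{0,\E}\|\qbold_0 \times \xboldE\|_{0,\E} \approx \|\nabla \alpha\|_{0,\E}\|\wbold_*\|_{0,\E}$. The main obstacle is the final absorption: $\|\wbold_*\|_{0,\E} \approx |\qbold_0|\hE^{5/2}$ is not directly controlled by the degrees of freedom. I would close the gap by a scaling argument: the elliptic system $\div \wbold = 0$, $\wbold \cdot \nE = 0$, $\curlbold \wbold = \qbold_0$ defines an injective linear map $\Rbb^3 \to L^2(\E)$, and, mapping to a shape-regular reference configuration, one obtains the converse bound $\|\wbold\|_{0,\E} \gtrsim \hE^{5/2}|\qbold_0| \approx \|\wbold_*\|_{0,\E}$ with constant depending only on the shape-regularity parameter. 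Absorbing $\|\wbold\|_{0,\E}$ on the left then gives $\|\wbold\|_{0,\E} \lesssim \hE^{1/2}\|\psiboldh \cdot \nE\|_{0,\partial \E}$, which combined with the gradient-part bound closes the proof.
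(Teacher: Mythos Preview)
Your decomposition $\psiboldh = \nabla\alpha + \wbold$ is identical to the paper's $\psiboldh = \nabla\Psi + \curlbold\rhobold$ (your $\wbold$ is precisely $\curlbold\rhobold$, by uniqueness of the div--curl system), and your key identity $2\|\wbold\|_{0,\E}^2 = -(\nabla\alpha,\qbold_0\times\xboldE)_{0,\E}$ is exactly what the paper obtains. Your route to this identity, via the secondary decomposition $\wbold=\wbold_*+\nabla\eta$ and the observation $(\wbold,\nabla\eta)_{0,\E}=0$, is a clean alternative to the paper's integration by parts with $\rhobold$.

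The gap is the final absorption. What you need is $\|\wbold_*\|_{0,\E}\lesssim\|\wbold\|_{0,\E}$, which is equivalent to the inverse estimate $\hE\|\curlbold\wbold\|_{0,\E}\lesssim\|\wbold\|_{0,\E}$. You invoke ``mapping to a shape-regular reference configuration'', but general polyhedra have no reference element; the paper explicitly flags this as one of the main technical obstacles of the VEM setting. A compactness argument over the family of admissible polyhedra would have to track the dependence of the div--curl solution operator on the domain and establish uniform positivity of the resulting constant --- none of which is in your proposal. The paper closes this step differently: since $\curlbold\wbold=\qbold_0\in[\Pbb_0(\E)]^3$, it uses the piecewise quartic bubble $\bE$ over the shape-regular tetrahedral subdivision of $\E$ to write
\[
\|\qbold_0\|_{0,\E}^2 \approx \int_\E \bE\,\qbold_0\cdot\qbold_0
= \int_\E \curlbold(\bE\,\qbold_0)\cdot\wbold
\lesssim \hE^{-1}\|\qbold_0\|_{0,\E}\|\wbold\|_{0,\E},
\]
which gives the needed inverse inequality directly, with constants depending only on shape regularity. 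Replacing your scaling argument by this bubble argument makes the proof complete.
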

\begin{proof}
There exist~$\rhobold \in H(\curlbold\, \curlbold, \E) \cap H(\div,\E)$ and~$\Psi \in H^1(\E)\slash \Rbb$ such that the following Helmholtz decomposition is valid:
\begin{equation} \label{splitting:VEM-face}
\psiboldh =\curlbold \rhobold + \nabla \Psi.
\end{equation}
To prove~\eqref{splitting:VEM-face}, we first define a function~$\Psi \in H^1(\Omega) \slash \Rbb$ satisfying weakly
\begin{equation} \label{definition:Psi:face}
\begin{cases}
\Delta \Psi = \div \psiboldh 			& \text{in } \E\\
\nE \cdot \nabla \Psi	= \nE \cdot\psiboldh	& \text{on } \partial \E.
\end{cases}
\end{equation}
Next, we define a function~$\rhobold \in H(\curlbold,\Omega)$ satisfying weakly, see, e.g., \cite{Kikuchi1990},
\begin{equation} \label{definition:rho:face}
\begin{cases}
\curlbold\, \curlbold \rhobold = \curlbold \psiboldh 	& \text{in } \E \\
\div \rhobold= 0 								& \text{in } \E \\
\nE \times \rhobold  = 0 							& \text{on } \partial \E.
\end{cases}
\end{equation}
It can be readily checked that~$\rhobold$ and~$\Psi$ are defined so that~\eqref{splitting:VEM-face} is valid. Moreover, we also have the orthogonality condition $(\curlbold \rhobold, \nabla \Psi)_{0,\E} = 0$, whence we get
\begin{equation} \label{using:orthogonality:face}
\Vert \psiboldh  \Vert^2_{0,\E} =  \Vert \nabla \Psi \Vert ^2_{0,\E} + \Vert \curlbold \rhobold \Vert^2_{0,\E} .
\end{equation}
We bound the two terms on the right-hand side of~\eqref{using:orthogonality:face} separately. We begin with that involving the gradient.
Using an integration by parts, the definition~\eqref{definition:Psi:face}, the fact that~$\Psi$ has zero average over~$\E$, and~$\div \psiboldh \in \Pbb_0(\E)$, we get
\[
\Vert \nabla \Psi \Vert_{0,\E}^2 = - \int_\E \div \psiboldh\ \Psi + \int_{\partial \E} \nE \cdot \psiboldh \, \Psi =  \int_{\partial \E} \nE \cdot \psiboldh \, \Psi \le \Vert \psiboldh \cdot \nE \Vert_{0,\partial \E} \Vert \Psi \Vert_{0,\partial \E}.
\]
Further using inequality~\eqref{Agmon:zero-average} with $s=1$, we arrive at
\begin{equation} \label{bound:Psi-face}
\vert \Psi \vert_{1,\E} \lesssim \hE^{\frac{1}{2}}   \Vert \psiboldh \cdot \nE \Vert_{0,\partial \E}.
\end{equation}
Next, we deal with the second term on the right-hand side of~\eqref{using:orthogonality:face}. Integrating by parts and using~\eqref{definition:rho:face}, we obtain
\[
\Vert \curlbold \rhobold \Vert_{0,\E}^2 = \int_\E \curlbold \rhobold \cdot \curlbold \rhobold = \int_{\E} \rhobold \cdot \curlbold \,\curlbold \rhobold  = \int_\E \rhobold \cdot \curlbold \psiboldh .
\]
Set~$\qbold_0 := \curlbold \psiboldh$.
Since~$\qbold_0 \in [\Pbb_0(\E)]^3$, a direct calculation shows that
\[
\qbold_0 = \frac{1}{2} \curlbold (\qbold_0 \times \xboldE).
\]
Together with an integration by parts and the property~$\nE \times \rhobold {}_{|\partial \E} = 0$, see~\eqref{definition:rho:face}, this entails
\[
\begin{split}
\Vert \curlbold \rhobold \Vert_{0,\E}^2 = \frac{1}{2} \int_\E \rhobold \cdot \curlbold (\qbold_0 \times \xboldE) 	& = \frac{1}{2} \int_\E \curlbold \rhobold \cdot (\qbold_0 \times \xboldE) + \frac{1}{2} \int_{\partial \E} (\nE \times \rhobold) \cdot (\qbold_0 \times \xboldE) \\
																					& = \frac{1}{2} \int_\E \curlbold \rhobold \cdot (\qbold_0 \times \xboldE) .
\end{split}
\]
Next, we use~\eqref{splitting:VEM-face}, the enhancing constraint~\eqref{enhancing:face}, and a Cauchy-Schwarz inequality:
\[
\Vert \curlbold \rhobold \Vert_{0,\E}^2 = \frac{1}{2} \int_\E  (\psiboldh - \nabla \Psi) \cdot (\qbold_0 \times \xboldE)  =  - \frac{1}{2}\int_\E   \nabla \Psi \cdot (\qbold_0 \times \xboldE) \le \frac{1}{2} \vert \Psi \vert_{1,\E} \Vert  \qbold_0 \Vert _{0,\E} \Vert \xboldE \Vert_{\infty,\E}.
\]
Recalling that~$\qbold_0:=\curlbold \psiboldh = \curlbold \, \curlbold \rhobold$ and using~$\Vert \xboldE \Vert_{\infty, \E} \le \hE$, we end up with
\begin{equation} \label{bound-on-rhobold}
\Vert \curlbold \rhobold \Vert_{0,\E}^2  \lesssim \hE \vert \Psi \vert_{1,\E} \Vert \curlbold \curlbold \rhobold  \Vert_{0,\E}.
\end{equation}
We claim that the following inverse inequality is valid:
\begin{equation} \label{3D:inverse-curl}
\hE \Vert \curlbold \, \curlbold \rhobold \Vert_{0,\E} \lesssim \Vert \curlbold \rhobold \Vert_{0,\E}.
\end{equation}
To prove~\eqref{3D:inverse-curl}, define~$\bE$ as the piecewise quartic bubble function over the shape-regular decomposition of~$\E$ into tetrahedra~$T$; see the proof of Lemma~\ref{lemma:polynomial-inverse}.
More precisely, $\bE$ is the positive quartic polynomial over each tetrahedron~$T$ given by the product of the four barycentric coordinates of~$T$, scaled such that~$\Vert \bE \Vert_{\infty,T} = 1$.

Since~$\curlbold \, \curlbold \rhobold \in [\Pbb_0(\E)]^3$,
we can use the equivalence of norms in finite dimensional spaces on each tetrahedron~$T$ and an integration by parts with the zero boundary conditions provided by the bubble function~$\bE$, and write
\[
\Vert \curlbold \, \curlbold \rhobold \Vert^2_{0,\E} \approx \int_\E \bE (\curlbold \, \curlbold \rhobold)^2 =  \int_\E  \curlbold (\bE \curlbold \curlbold \rhobold) \cdot \curlbold \rhobold.
\]
Next, a Cauchy-Schwarz inequality, the polynomial inverse inequality~\eqref{H1-L2}, and a trivial bound on the~$L^{\infty}$ norm of the bubble function~$\bE$ yield
\[
\begin{split}
\Vert \curlbold \, \curlbold \rhobold \Vert^2_{0,\E} 
& \lesssim \Vert \curlbold (\bE \curlbold\, \curlbold \rhobold) \Vert_{0,\E}  \Vert  \curlbold \rhobold \Vert_{0,\E} \\
& \lesssim \hE^{-1} \Vert \curlbold\, \curlbold \rhobold \Vert_{0,\E} \Vert  \curlbold \rhobold \Vert_{0,\E},
\end{split}
\]
which entails~\eqref{3D:inverse-curl}.

Inserting~\eqref{3D:inverse-curl} in~\eqref{bound-on-rhobold} and using bound on~$\Psi$ in~\eqref{bound:Psi-face} give
\begin{equation} \label{bound:rhobold-face}
\Vert \curlbold \rhobold \Vert_{0,\E}  \lesssim  \vert \Psi \vert_{1,\E} \lesssim \hE^{\frac{1}{2}}   \Vert \psiboldh \cdot \nE \Vert_{0,\partial \E}.
\end{equation}
Combining~\eqref{bound:Psi-face} and~\eqref{bound:rhobold-face} into~\eqref{using:orthogonality:face} yields the assertion.
\end{proof}

Thanks to the a priori estimate~\eqref{a-priori:bound:psiboldh}, we can prove the following interpolation result.
\begin{prop} \label{prop:best-interpolation-face}
For all~$\E \in \taun$, let~$\psibold \in [H^s(\E)]^3$, $ 1/2 < s \le 1$. Then, there exists~$\psiboldI \in \VzfaceE$ such that
\[
\Vert \psibold - \psiboldI \Vert_{0,\E} \lesssim  \hE^s \vert  \psibold \vert_{s,\E}.
\]
\end{prop}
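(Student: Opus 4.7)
The plan is to define $\psiboldI \in \VzfaceE$ by matching the degrees of freedom of $\psibold$, namely by setting
\[
\psiboldI \cdot \nF = \frac{1}{|\F|} \int_\F \psibold \cdot \nF \qquad \forall \F \in \EE,
\]
which makes sense because the assumption $s > 1/2$ ensures that $\psibold$ has an $L^2$-trace on $\partial \E$. Then I would introduce $\psiboldpi \in [\Pbb_0(\E)]^3$ as the $L^2$-projection of $\psibold$ onto constant vectors, so that componentwise $\psibold - \psiboldpi$ has zero mean on $\E$ and the Bramble--Hilbert estimate gives $\Vert \psibold - \psiboldpi \Vert_{0,\E} \lesssim \hE^s \vert \psibold \vert_{s,\E}$.

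The key observation is that $[\Pbb_0(\E)]^3 \subset \VzfaceE$: a constant vector has zero divergence and curl, constant normal component on each face, and satisfies the enhancing constraint~\eqref{enhancing:face} since $\int_\E \xboldE = 0$ by definition of the barycenter, so $\int_\E \qbold_0 \cdot (\xboldE \times \qbold_0') = 0$ for any two constants. Consequently $\psiboldI - \psiboldpi \in \VzfaceE$, and Proposition~\ref{prop:splitting:VEM-face} yields
\[
\Vert \psiboldI - \psiboldpi \Vert_{0,\E} \lesssim \hE^{1/2} \Vert (\psiboldI - \psiboldpi) \cdot \nE \Vert_{0,\partial \E}.
\]

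Next, I would bound the boundary term face by face. Since $(\psiboldI - \psiboldpi) \cdot \nF$ is constant on $\F$ and equals the $\F$-average of $(\psibold - \psiboldpi) \cdot \nF$, a Cauchy--Schwarz inequality on each face gives $\Vert (\psiboldI - \psiboldpi) \cdot \nE \Vert_{0,\partial \E} \le \Vert \psibold - \psiboldpi \Vert_{0,\partial \E}$. Applying the zero-average trace bound~\eqref{Agmon:zero-average} (with exponent $s$) to $\psibold - \psiboldpi$ then produces
\[
\Vert \psibold - \psiboldpi \Vert_{0,\partial \E} \lesssim \hE^{s-1/2} \vert \psibold \vert_{s,\E}.
\]
Chaining the last three estimates yields $\Vert \psiboldI - \psiboldpi \Vert_{0,\E} \lesssim \hE^s \vert \psibold \vert_{s,\E}$, and a final triangle inequality with $\Vert \psibold - \psiboldpi \Vert_{0,\E}$ delivers the claim.

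The only nontrivial point is verifying that $[\Pbb_0(\E)]^3 \subset \VzfaceE$ so that the difference $\psiboldI - \psiboldpi$ is admissible in Proposition~\ref{prop:splitting:VEM-face}; once this is in place, the rest is a standard Scott--Dupont argument combined with the a priori bound just established. No delicate passage through fractional trace spaces is needed because the face degrees of freedom of $\psibold$ are captured directly by its $L^2$-trace on $\partial \E$.
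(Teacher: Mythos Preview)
Your proposal is correct and follows essentially the same route as the paper: define $\psiboldI$ via the face degrees of freedom, subtract the constant average $\psiboldpi$, apply Proposition~\ref{prop:splitting:VEM-face} to $\psiboldI-\psiboldpi\in\VzfaceE$, and close with the zero-average trace estimate~\eqref{Agmon:zero-average}. The only cosmetic difference is that you bound $\Vert(\psiboldI-\psiboldpi)\cdot\nE\Vert_{0,\partial\E}$ directly via the $L^2$-contraction property of the facewise average, whereas the paper inserts a triangle inequality and bounds both pieces by $\Vert(\psibold-\psiboldpi)\cdot\nE\Vert_{0,\partial\E}$; your explicit check that $[\Pbb_0(\E)]^3\subset\VzfaceE$ is stated without proof in the paper.
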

\begin{proof}
Given~$\E \in \taun$, let~$\psiboldpi$ be the vector given by the average of each component of~$\psibold$ over~$\E$.
We define the interpolant~$\psiboldI \in \VzfaceE$ through its degrees of freedom as follows:
\begin{equation} \label{definition:face-interpolant}
\int_\F (\psibold - \psiboldI ) \cdot \nF = 0 \quad \quad \forall \F \in \EE.
\end{equation}
Note that~\eqref{definition:face-interpolant} is well defined since~$\psibold$ belongs to~$[H^s(\E)]^3$ with~$s>1/2$.

The triangle inequality, the definition of~$\psiboldpi$, and the Poincar\'e inequality together with interpolation theory yield
\[
\Vert \psibold - \psiboldI \Vert_{0,\E} \le \Vert \psibold - \psiboldpi \Vert_{0,\E} +  \Vert \psiboldI - \psiboldpi \Vert_{0,\E} \lesssim \hE^s \vert  \psibold \vert_{s,\E} +  \Vert \psiboldI - \psiboldpi \Vert_{0,\E}.
\]
We still have to bound the second term on the right-hand side.
Since vector constant functions are contained in the space~$\VzfaceE$, the term~$\psiboldI-\psiboldpi$ belongs to~$\VzfaceE$.
To get the assertion, we apply~\eqref{a-priori:bound:psiboldh}, the triangle inequality, the definition of~$\psiboldI$ in~\eqref{definition:face-interpolant}, and inequality~\eqref{Agmon:zero-average}:
\[
\begin{split}
\Vert \psiboldI - \psiboldpi \Vert_{0,\E} 
& \lesssim \hE^{\frac{1}{2}} \Vert  (\psiboldI - \psiboldpi)\cdot \nE \Vert_{0,\partial \E} \\
& \le  \hE^{\frac{1}{2}} \Vert  (\psibold - \psiboldpi)\cdot \nE \Vert_{0,\partial \E} + \hE^{\frac{1}{2}} \Vert  (\psibold - \psiboldI )\cdot \nE \Vert_{0,\partial \E} \\
& \le 2 \hE^{\frac{1}{2}} \Vert  (\psibold - \psiboldpi)\cdot \nE \Vert_{0,\partial \E} \lesssim \hE^s \vert \psibold \vert_{s,\E} ,
\end{split}
\]
whence the assertion follows.
\end{proof}

For future convenience, we also define face virtual element spaces on faces.
More precisely, let~$\F \in \Ecaln$ be a face. We define the low order local 2D face virtual element space as follows:
\begin{equation} \label{2d-face-space}
\begin{split}
\VzfaceF := \{ \psiboldh \in [L^2(\F)]^2 \mid 	& \divF \psiboldh \in \Pbb_0(\E),\, \rotF \psiboldh \in \Pbb_0(\E),    \\ 
								& \psiboldh{}_{|\e} \cdot \ne \in \Pbb_0(\e) \, \forall \e \in \EF , \, \int_\E \psiboldh \cdot \xboldF^\perp=0  \},
\end{split}
\end{equation}
where we have set, in the local coordinates of~$\F$,
\begin{equation} \label{def:xboldF}
\xboldF := (x-\xF, y - \yF)^T \ , \qquad \xboldF^\perp := (\yF - y, x-\xF)^T .
\end{equation}

\begin{cor}
For all~$\E \in \taun$, let~$\psibold \in [H^\varepsilon(\E)]^3$, $\varepsilon > 0$, and~$\div \psibold \in H^s(\E)$, $ 0 \le s \le 1$.
Then, the following bound is valid:
$$
\Vert \div (\psibold - \psiboldI) \Vert_{0,\E} \lesssim  \hE^s \vert  \div \psibold \vert_{s,\E}.
$$
Such result, combined with Proposition \ref{prop:best-interpolation-face}, yields an optimal $O(\hE^s)$ interpolation estimate in the $H_{\div}$ norm.
\end{cor}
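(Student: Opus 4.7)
The plan is to exploit the commuting diagram property of the interpolation operator: I expect to show that $\div \psiboldI$ coincides with the $L^2$-projection of $\div \psibold$ onto constants, after which the claim becomes a routine Poincar\'e/Bramble--Hilbert estimate.

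First I would observe that, by the very definition of $\VzfaceE$ in~\eqref{definition:Vface}, the divergence $\div \psiboldI$ is a constant on $\E$, so that $\div \psiboldI = c$ for some $c \in \Rbb$. I would then evaluate $c$ by integrating over $\E$, applying the divergence theorem, and using the defining property~\eqref{definition:face-interpolant} of the interpolant face by face:
\[
|\E| \, c = \int_\E \div \psiboldI = \sum_{\F \in \EE} \int_\F \psiboldI \cdot \nF = \sum_{\F \in \EE} \int_\F \psibold \cdot \nF = \int_\E \div \psibold.
\]
This yields the commutation identity $\div \psiboldI = \Piboldz^0 (\div \psibold)$, where $\Piboldz^0$ denotes the $L^2$-orthogonal projection onto $\Pbb_0(\E)$; in other words, $\div \psiboldI$ is the average of $\div \psibold$ over $\E$. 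Note that for this step to make sense I need $\psibold \in [H^\varepsilon(\E)]^3$ with $\varepsilon>0$, so that the face integrals of $\psibold \cdot \nF$ are well defined (the same regularity that was needed to define $\psiboldI$ in Proposition~\ref{prop:best-interpolation-face}).

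Once this commuting property is in hand, the estimate reduces to the approximation of $\div \psibold \in H^s(\E)$ by its average:
\[
\Vert \div \psibold - \div \psiboldI \Vert_{0,\E} = \Vert \div \psibold - \Piboldz^0 (\div \psibold) \Vert_{0,\E} \lesssim \hE^s \vert \div \psibold \vert_{s, \E},
\]
which is a standard Poincar\'e-type (Bramble--Hilbert) bound on star-shaped domains for $0\le s \le 1$, available thanks to the mesh regularity assumptions of Section~\ref{section:preliminaries}.

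The hard part is essentially nonexistent here, since the real work was done in Proposition~\ref{prop:best-interpolation-face}; the only subtlety is verifying that the interpolant degrees of freedom interact cleanly with the divergence theorem to produce the commuting diagram, and observing that the constraint $\div \psiboldh \in \Pbb_0(\E)$ in the definition of $\VzfaceE$ makes the target space exactly rich enough to reproduce $\Piboldz^0(\div \psibold)$.
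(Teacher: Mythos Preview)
Your proposal is correct and follows essentially the same approach as the paper: the paper simply states that it is trivial to check that $\div \psiboldI$ equals the average of $\div \psibold$ on $\E$ and then invokes the Poincar\'e inequality, which is precisely the commuting-diagram computation you spell out via the divergence theorem and~\eqref{definition:face-interpolant}. One minor remark: the regularity needed in Proposition~\ref{prop:best-interpolation-face} was $s>1/2$, not merely $\varepsilon>0$; here the weaker assumption $\psibold\in[H^\varepsilon(\E)]^3$ with $\varepsilon>0$ (together with $\div\psibold\in L^2(\E)$) is stated only to ensure the face integrals defining $\psiboldI$ make sense, as the paper also notes.
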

\begin{proof}
It is trivial to check that $\div \psiboldI \in \Pbb_0(K)$ corresponds to the average of~$\div \psibold$ on~$\E$.
The assertion follows from the Poincar\'e inequality. The regularity condition above on $\psibold$ is stated also to guarantee the well-posedness of the interpolant.  
\end{proof}

\section{Interpolation properties of 3D edge spaces}  \label{section:edge3DVEM-loworder}

\subsection{A useful result for 2D edge spaces}  \label{section:edge2DVEM-loworder}
Let~$\E \in \taun$ be an element and~$\F \in \EE$ be any of its faces. We define the low order local 2D edge virtual element space as follows:
\begin{equation} \label{2d-edge-space}
\begin{split}
\VzedgeF := \{ \vboldh \in [L^2(\F)]^2 \mid 	& \divF \vboldh \in \Pbb_0(\F),\, \rotF \vboldh \in \Pbb_0(\F),    \\ 
									& \vboldh{}_{|\e} \cdot \te \in \Pbb_0(\e) \, \forall \e \in \EF , \, \int_\F \vboldh \cdot \xboldF=0  \},
\end{split}
\end{equation}
where~$\xboldF $ is defined in~\eqref{def:xboldF}.

In what follows, \emph{enhancing constraint} refers to the constraint
\begin{equation} \label{enhancing:edge2D}
 \int_\F \vboldh \cdot \xboldF=0 .
\end{equation}
We endow the local space~$\VzedgeF$ with a set of unisolvent degrees of freedom, which is provided by the single value of~$\vboldh \cdot \te$ over each edge~$\e \in \EF$.
Moreover, it contains 2D N\'ed\'elec polynomials of degree~$1$, i.e, the space~$[\Pbb_0(\E)]^2 \oplus (\xbold^{\perp} \Pbb_0(\E))$.

The main result of the section is the following a priori estimate bound.
Although its proof has some similarities with that of Proposition~\ref{prop:splitting:VEM-face}, it is worth reporting the details here for completeness.
\begin{prop} \label{prop:splitting:VEM-2Dedge}
Given~$\vboldh \in \VzedgeF$, the following a priori bound is valid:
\begin{equation} \label{a-priori:bound:vboldh2D}
\Vert \vboldh  \Vert_{0,\F} \lesssim \hF^{\frac{1}{2}}  \Vert \vboldh \cdot \tpartialF \Vert_{0,\partial \F}.
\end{equation}
\end{prop}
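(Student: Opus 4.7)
The plan is to mirror the proof of Proposition~\ref{prop:splitting:VEM-face}, with the roles of normal/tangential data and of divergence/curl suitably swapped to accommodate the tangential degrees of freedom of $\VzedgeF$ and the enhancing constraint~\eqref{enhancing:edge2D}, which is now paired with the identity $\divF \xboldF = 2$ rather than with a curl. To this end, I introduce the Helmholtz-type decomposition
\[
\vboldh = \nablaF \Psi + \curlboldF \rho,
\]
where $\Psi \in H^1(\F)$ solves the \emph{Dirichlet} problem $\DeltaF \Psi = \divF \vboldh$ in $\F$, $\Psi = 0$ on $\partial \F$, and $\rho$ is determined (up to an additive constant, fixed by $\int_\F \rho = 0$) through $\curlboldF \rho = \vboldh - \nablaF \Psi$; the latter is well-posed because $\vboldh - \nablaF \Psi$ is divergence-free and $\F$ is simply connected. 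A direct computation shows that $\rho$ then satisfies the Neumann-type problem $-\DeltaF \rho = \rotF \vboldh$ in $\F$ with $-\partial \rho/\partial \npartialF = \vboldh \cdot \tpartialF$ on $\partial \F$, whose compatibility is guaranteed by Stokes' theorem. Since $\Psi = 0$ on $\partial \F$ and $\divF \curlboldF \rho = 0$, an integration by parts yields $\int_\F \nablaF \Psi \cdot \curlboldF \rho = 0$, whence
\[
\Vert \vboldh \Vert_{0,\F}^2 = \Vert \nablaF \Psi \Vert_{0,\F}^2 + \Vert \curlboldF \rho \Vert_{0,\F}^2.
\]

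The term $\Vert \curlboldF \rho \Vert_{0,\F} = \vert \rho \vert_{1,\F}$ is controlled directly through the Neumann data. An integration by parts, combined with $\rotF \vboldh \in \Pbb_0(\F)$ and $\int_\F \rho = 0$, makes the bulk contribution vanish, leaving $\vert \rho \vert_{1,\F}^2 = -\int_{\partial \F} \rho\,(\vboldh \cdot \tpartialF)$. A Cauchy--Schwarz inequality on $\partial \F$ and the trace bound~\eqref{Agmon:zero-average} (applied with $s = 1$, using that $\rho$ has zero average) then yield
\[
\Vert \curlboldF \rho \Vert_{0,\F} \lesssim \hF^{\frac{1}{2}} \Vert \vboldh \cdot \tpartialF \Vert_{0,\partial \F}.
\]

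For the gradient part I would proceed as in the 3D case by invoking the enhancing constraint. Setting $c_0 := \divF \vboldh \in \Pbb_0(\F)$, integration by parts with $\Psi = 0$ on $\partial \F$ gives $\Vert \nablaF \Psi \Vert_{0,\F}^2 = -c_0 \int_\F \Psi$. Using the algebraic identity $c_0 = \frac{1}{2}\divF(c_0 \xboldF)$ and a further integration by parts, then substituting $\nablaF \Psi = \vboldh - \curlboldF \rho$ and exploiting~\eqref{enhancing:edge2D}, I would obtain
\[
\Vert \nablaF \Psi \Vert_{0,\F}^2 = -\frac{c_0}{2}\int_\F \curlboldF \rho \cdot \xboldF \lesssim \hF\, \Vert \divF \vboldh \Vert_{0,\F}\, \Vert \curlboldF \rho \Vert_{0,\F},
\]
where I used $\Vert \xboldF \Vert_{\infty,\F} \lesssim \hF$. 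The last and most delicate ingredient, fully analogous to~\eqref{3D:inverse-curl}, is the inverse estimate $\Vert \divF \vboldh \Vert_{0,\F} \lesssim \hF^{-1} \Vert \nablaF \Psi \Vert_{0,\F}$. Although $\Psi$ itself is not polynomial, its Laplacian $\DeltaF \Psi = c_0$ is; this inverse bound can therefore be established through a piecewise cubic bubble $\bF$ on a shape-regular sub-triangulation of $\F$ (in the spirit of Lemma~\ref{lemma:polynomial-inverse}), norm equivalence for polynomials, integration by parts (boundary terms vanishing thanks to $\bF{}_{|\partial \F} = 0$), and the polynomial inverse estimate~\eqref{H1-L2}. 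Plugging this back absorbs $\Vert \divF \vboldh \Vert_{0,\F}$ and produces $\Vert \nablaF \Psi \Vert_{0,\F} \lesssim \Vert \curlboldF \rho \Vert_{0,\F}$; combined with the orthogonal splitting and the previous bound on $\curlboldF \rho$, the assertion follows. I expect this inverse estimate to be the main technical obstacle, since one can only exploit the polynomial character of $\DeltaF \Psi$ and not of $\Psi$.
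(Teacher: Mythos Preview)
Your proposal is correct and follows essentially the same route as the paper: the same Helmholtz decomposition with $\Psi$ solving the homogeneous Dirichlet problem and $\rho$ the zero-mean Neumann problem, the same orthogonality, the same boundary argument for $\curlboldF\rho$, and the same use of the enhancing constraint~\eqref{enhancing:edge2D} together with $\divF\xboldF=2$ and the bubble-based inverse estimate $\Vert\DeltaF\Psi\Vert_{0,\F}\lesssim\hF^{-1}\Vert\nablaF\Psi\Vert_{0,\F}$ to control $\nablaF\Psi$. The ``main technical obstacle'' you flag is exactly the inverse inequality~\eqref{2D:inverse-Laplacian} in the paper, proved precisely as you outline.
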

\begin{proof}
We first prove that there exist~$\rho \in H^1(\F) \slash \Rbb$ and~$\Psi \in H^1(\F)$ such that the following Helmholtz decomposition is valid:
\begin{equation} \label{splitting:VEM-2Dedge}
\vboldh = \curlboldF \rho + \nablaF \Psi.
\end{equation}
Define~$\rho \in H^1(\F) \slash \Rbb$ and~$\Psi \in H^1(\F)$ as the weak solutions to the problems
\begin{equation} \label{definition:rho:2Dedge}
\begin{cases}
\DeltaF \rho = \rotF \vboldh 									& \text{in } \F \\
 \tpartialF \cdot \curlboldF \rho (= \npartialF \cdot \nablaF \rho) = \tpartialF \cdot \vboldh 	& \text{on } \partial \F
\end{cases}
\end{equation}
and
\[
\begin{cases}
\DeltaF \Psi = \divF \vboldh 	& \text{in } \F \\
\Psi = 0 					& \text{on } \partial \F.
\end{cases}
\]
It can be readily checked that~$\rho$ and~$\Psi$ are defined so that~\eqref{splitting:VEM-2Dedge} is valid. 
Moreover, we also have the orthogonality condition $(\curlboldF \rho, \nablaF \Psi)_{0,\F} = 0$, whence we get
\begin{equation} \label{using:orthogonality:2Dedge}
\Vert \vboldh  \Vert^2_{0,\F} = \Vert \curlboldF \rho \Vert^2_{0,\E} + \Vert \nablaF \Psi \Vert ^2_{0,\E}.
\end{equation}
We bound the two terms on the right-hand side of~\eqref{using:orthogonality:2Dedge} separately. We begin with that involving the~$\curlboldF$ operator.
To this aim, we preliminarily recall the identity
\begin{equation} \label{rot-curl:Laplace}
\rotF \, \curlboldF = \DeltaF.
\end{equation}
Integrating by parts and using differential identity~\eqref{rot-curl:Laplace} together with the fact that~$\rho$ has constant Laplacian and zero average over~$\F$, we obtain
\[
\Vert \curlboldF \rho \Vert_{0,\F}^2 = -\int_\F \rho \,  \DeltaF \rho + \int_{\partial \F} \rho (\tpartialF \cdot \curlboldF \rho )  = \int_{\partial \F} \rho (\tpartialF \cdot \curlboldF \rho ),
\]
which, recalling~\eqref{definition:rho:2Dedge}, applying a Cauchy-Schwarz inequality, and using~\eqref{Agmon:zero-average} with~$s=1$, yields
\[
\Vert \curlboldF \rho \Vert_{0,\F}^2 \le \Vert \rho \Vert_{0,\partial \F} \Vert \tpartialF \cdot \vboldh \Vert_{0,\partial \F} \lesssim \hE^{\frac{1}{2}} \Vert \nablaF \rho \Vert_{0, \F} \Vert \tpartialF \cdot \vboldh  \Vert_{0,\partial \F}.
\]
Observing that~$\Vert \nablaF \rho \Vert_{0,\F} = \Vert \curlboldF \rho \Vert_{0,F}$, we get
\begin{equation} \label{bound:curlrho-2Dedge}
\Vert \curlboldF \rho \Vert_{0,\F}  \lesssim \hF^{\frac{1}{2}} \Vert \tpartialF \cdot \vboldh \Vert_{0,\partial \F}.
\end{equation}
Next, we deal with the second term on the right-hand side of~\eqref{using:orthogonality:2Dedge}.
To this aim, we use an integration by parts, $\Psi_{|\partial \F}= 0$, the fact that~$\DeltaF \Psi$ is constant over~$\F$, and the identity~$2\divF  \xboldF =1$:
\[
\Vert \nablaF \Psi \Vert^2_{0,\F} = \int_\F (-\DeltaF \Psi) \Psi + \int_{\partial \F} \Psi ( \nablaF \Psi \cdot \npartialF) =\frac{1}{2} \int_\F (-\DeltaF \Psi) \Psi \divF \xboldF.
\]
First an integration by parts, then equation~\eqref{splitting:VEM-2Dedge} and the enhancing constraint~\eqref{enhancing:edge2D}, finally a Cauchy-Schwarz inequality lead to
\begin{equation} \label{grad:Psi}
\begin{split}
\Vert \nablaF \Psi \Vert^2_{0,\F} 	& = \frac{1}{2} \int_\F (\DeltaF \Psi) \nablaF \Psi \cdot \xboldF = \frac{1}{2} \int_\F (\DeltaF \Psi) \left(\vboldh - \curlboldF \rho \right) \cdot \xboldF  \\
						& =  -\frac{1}{2} \int_\F (\DeltaF \Psi)  \curlboldF \rho  \cdot \xboldF  \lesssim \Vert \DeltaF \Psi \Vert_{0,\F} \Vert \curlboldF \rho \Vert_{0,\F} \Vert \xboldF \Vert_{\infty, \F}.
\end{split}
\end{equation}
The following inverse estimate is valid:
\begin{equation} \label{2D:inverse-Laplacian}
\Vert \DeltaF \Psi \Vert_{0,\F} \lesssim \hF^{-1} \Vert \nablaF \Psi \Vert_{0,\F}.
\end{equation}
To prove~\eqref{2D:inverse-Laplacian}, introduce~$\bF$ the piecewise cubic bubble function over the decomposition of~$\F$ into triangles~$T$; see the proof of Lemma~\ref{lemma:polynomial-inverse}.
Using that~$\DeltaF \Psi$ is constant over~$\F$ and the equivalence of norms in finite dimensional spaces on each triangle T yields
\[
 \Vert \DeltaF \Psi \Vert_{0,\F}^2 \approx \int_\F \bF (\DeltaF \Psi)^2.
\]
Integrating by parts, using that~$\bF{}_{|\partial \F}=0$, applying the polynomial inverse inequality~\eqref{H1-L2}, recalling that~$\Vert  \bE \Vert_{\infty ,\E} =1$, and using again the fact that~$\DeltaF \Psi$ is constant over~$\F$ allow us to write
\[
\begin{split}
 \Vert \DeltaF \Psi \Vert_{0,\F}^2
& \approx  \int_\F \divF (\bF  \DeltaF \Psi) \cdot \nablaF \Psi \le \Vert \divF (\bF \DeltaF \Psi)  \Vert_{0,\F} \vert \Psi  \vert_{1,\F} \\
& \lesssim \hF^{-1} \Vert \bF \DeltaF \Psi  \Vert_{0,\F} \vert \Psi  \vert_{1,\F} \le \hF^{-1} \Vert  \DeltaF \Psi  \Vert_{0,\F} \vert \Psi  \vert_{1,\F} ,
\end{split}
\]
whence~\eqref{2D:inverse-Laplacian} follows.

Inserting the inverse inequality~\eqref{2D:inverse-Laplacian}, the bound~$\Vert \xboldF \Vert_{\infty, \F} \lesssim \hF$, and~\eqref{bound:curlrho-2Dedge} in~\eqref{grad:Psi} yields
\begin{equation} \label{bound:Psi-2Dedge}
\Vert \nabla \Psi \Vert_{0,\F} \lesssim \Vert \curlboldF \rho \Vert_{0,\F} \lesssim 
\hF^{\frac{1}{2}} \Vert \tpartialF \cdot \vboldh \Vert_{0,\partial \F} .
\end{equation}
Inserting~\eqref{bound:curlrho-2Dedge} and~\eqref{bound:Psi-2Dedge} in~\eqref{using:orthogonality:2Dedge} provides~\eqref{a-priori:bound:vboldh2D}, which concludes the proof.
\end{proof}

\subsection{The main interpolation result for edge spaces}
Henceforth, given a vector function~$\vbold: \E \rightarrow \Rbb^3$ and any face~$\F\in \EE$, the symbol~$\vboldF$ denotes the vector field given by the tangent component of~$\vboldF$ on~$\F$,
i.e., the restriction to~$\F$ of $\vbold - \nF (\vbold \cdot \nF)$.
We can interpret such a function as a three-component vector field, but also as a two-component vector field living in the tangent plane to the face.
With an abuse of notation, we shall use both interpretations in the following. Finally, observe that $\vboldF$ corresponds to a~$90^{\circ}$ rotation of~$\vboldF \times \nF$.
We also introduce a function~$\vboldpartialE$ over~$\partial\E$ defined as~$\vboldpartialE{}_{|\F} :=\vboldF$ on each face~$\F$.

Given~$\E \in \taun$, following~\cite{da2018lowest}, we define the low order local 3D edge virtual element space in~$\E$ as follows:
\[
\begin{split}
\VzedgeE := \{ \vboldh 	& \in [L^2(\E)]^3 \mid  \div \vboldh =0 ,\, \curlbold \,\curlbold \vboldh \in [\Pbb_0(\E)]^3 , \\ 
					& \vboldhF  \in \VzedgeF \, \forall \F \in \EE , \, \vboldh \cdot \widetilde{\te} \text{ continuous  at each edge~$\e$ of~$\E$}, \\
					& \int_\E \curlbold \vboldh \cdot (\xboldE \times \qbold_0) =0 \, \forall \qbold_0 \in [\Pbb_0(\E)]^3    \},
\end{split}
\]
where~$\xboldE$ is defined as in~\eqref{def:xboldE}, and $\widetilde{\te}$ denotes a single valued unit tangent vector to edge $e$ chosen once and for all.

We endow the local space~$\VzedgeE$ with a set of unisolvent degrees of freedom, which is provided by the single value of~$\vboldh \cdot \te$ over each edge~$\e$ of~$\E$.
Moreover, it contains 3D N\'ed\'elec polynomials of degree~$1$, i.e, the space~$[\Pbb_0(\E)]^3 \oplus (\xbold \times \Pbb_0(\E))$.

In what follows, we use the following exact sequence property, which was proven in~\cite[Proposition~4.2]{da2018lowest}:
\begin{equation} \label{curlbold-edge=face}
\curlbold (\VzedgeE) = \{ \psiboldh \in \VzfaceE \mid \div \psiboldh = 0    \}  \subset \VzfaceE  .
\end{equation}
With this result at hand, we are able to prove the following bound for functions in 3D edge virtual element spaces.
\begin{prop} \label{proposition:apriori-3Dedge}
The following bound is valid for any~$\vboldh \in \VzedgeE$:
\begin{equation} \label{apriori-3Dedge}
\Vert  \vboldh  \Vert_{0,\E} \lesssim \hE^{\frac{1}{2}} \Vert  \vboldhF   \Vert_{0,\partial \E}.
\end{equation}
\end{prop}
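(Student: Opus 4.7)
The plan is to mirror the strategy of Propositions~\ref{prop:splitting:VEM-face} and~\ref{prop:splitting:VEM-2Dedge}, combining a Friedrichs--Monk reduction with the exact-sequence property $\curlbold(\VzedgeE) \subset \VzfaceE$ in order to reduce the task to a face-by-face Stokes computation and to an inverse-type estimate on $\VzedgeF$.

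Since $\vboldh$ is divergence free and its tangential trace lies in $[L^{2}(\partial \E)]^{2}$, Lemma~\ref{lemma:Friedrichs-Monk} applied directly yields
\[
\Vert \vboldh \Vert_{0,\E} \lesssim \hE \Vert \curlbold \vboldh \Vert_{0,\E} + \hE^{\frac{1}{2}} \Vert \vboldh \times \nE \Vert_{0,\partial \E},
\]
and the elementary identity $\Vert \vboldh \times \nE \Vert_{0,\partial \E} = \Vert \vboldhpartialE \Vert_{0,\partial \E}$ shows that the boundary piece already has the desired form; the remaining task is to prove $\hE \Vert \curlbold \vboldh \Vert_{0,\E} \lesssim \hE^{\frac{1}{2}} \Vert \vboldhpartialE \Vert_{0,\partial \E}$. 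Setting $\psiboldh := \curlbold \vboldh$, the exact-sequence property~\eqref{curlbold-edge=face} places $\psiboldh$ in $\VzfaceE$ with $\div \psiboldh = 0$, so Proposition~\ref{prop:splitting:VEM-face} gives $\Vert \psiboldh \Vert_{0,\E} \lesssim \hE^{\frac{1}{2}} \Vert \psiboldh \cdot \nE \Vert_{0,\partial \E}$. On each face $\F \in \EE$ the standard tangential-normal curl identity $\psiboldh \cdot \nF = \rotF \vboldhF$ together with $\vboldhF \in \VzedgeF$ shows that $\rotF \vboldhF$ is a constant $c_\F \in \Pbb_0(\F)$, which Stokes' theorem rewrites as $c_\F |\F| = \int_{\partial \F} \vboldhF \cdot \tpartialF$.

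A Cauchy--Schwarz step on $\partial \F$ together with $|\F| \approx \hF^{2}$, $|\partial \F| \approx \hF$ then gives $\Vert c_\F \Vert_{0,\F}^{2} \lesssim \hF^{-1} \Vert \vboldhF \cdot \tpartialF \Vert_{0,\partial \F}^{2}$, and the chain closes as soon as one has an inverse-trace estimate of the form $\Vert \vboldhF \cdot \tpartialF \Vert_{0,\partial \F} \lesssim \hF^{-1/2} \Vert \vboldhF \Vert_{0,\F}$ for $\vboldhF \in \VzedgeF$: summing such a per-face bound over the (uniformly bounded) number of faces of $\E$ and inserting the result into the Proposition~\ref{prop:splitting:VEM-face} estimate produces $\hE \Vert \psiboldh \Vert_{0,\E} \lesssim \hE^{\frac{1}{2}} \Vert \vboldhpartialE \Vert_{0,\partial \E}$, which combined with Friedrichs--Monk concludes the proof. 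The main obstacle I anticipate is precisely this inverse-trace estimate: since $\vboldhF$ is a virtual, non-polynomial function whose interior behaviour is defined through an elliptic PDE, no off-the-shelf FEM inverse inequality applies, and one has to exploit simultaneously the polynomial character of $\divF \vboldhF$ and $\rotF \vboldhF$, the serendipity relation $\int_{\F} \vboldhF \cdot \xboldF = 0$ and the shape regularity of $\F$, most likely through a bubble-function/duality argument on a sub-triangulation of $\F$ in the spirit of the proof of~\eqref{L2-H1/2} in Lemma~\ref{lemma:polynomial-inverse}.
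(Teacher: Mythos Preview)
Your overall architecture is exactly the paper's: Friedrichs--Monk~\eqref{Friedrichs-Monk} reduces everything to controlling $\hE\Vert\curlbold\vboldh\Vert_{0,\E}$; the exact-sequence property~\eqref{curlbold-edge=face} together with Proposition~\ref{prop:splitting:VEM-face} then bounds this by $\hE^{3/2}\sum_{\F}\Vert\rotF\vboldhF\Vert_{0,\F}$ via the identity $\curlbold\vboldh\cdot\nF=\rotF\vboldhF$; and the final step is an inverse estimate $\Vert\rotF\vboldhF\Vert_{0,\F}\lesssim\hF^{-1}\Vert\vboldhF\Vert_{0,\F}$.

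Where you diverge is in this last step. You route through Stokes' theorem and then flag the inverse-trace inequality $\Vert\vboldhF\cdot\tpartialF\Vert_{0,\partial\F}\lesssim\hF^{-1/2}\Vert\vboldhF\Vert_{0,\F}$ as the ``main obstacle''. This is an unnecessary detour: the paper bypasses it entirely by proving $\Vert\rotF\vboldhF\Vert_{0,\F}\lesssim\hF^{-1}\Vert\vboldhF\Vert_{0,\F}$ \emph{directly}, using only that $\rotF\vboldhF\in\Pbb_0(\F)$ and the same bubble-function trick you invoke (cf.~the proofs of~\eqref{3D:inverse-curl} and~\eqref{2D:inverse-Laplacian}). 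No serendipity constraint, no control of $\divF\vboldhF$, no duality argument is needed. In fact your inverse-trace estimate is strictly harder: the natural route to it (Lemma~\ref{lemma:inverse1D} plus the $\rotF$-trace inequality~\eqref{rot-trace}) produces a residual term $\hF^{1/2}\Vert\rotF\vboldhF\Vert_{0,\F}$ that you would still have to absorb via the very inverse estimate the paper proves directly---so the Stokes step gains nothing. Drop it, and apply the bubble argument straight to $\rotF\vboldhF$.
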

\begin{proof}
{Recall the differential identity~$\curlbold \vboldh\cdot \nF{}_{|\F} = \rotF \vboldhF$ on each face~$\F \in \EE$.}

Using~\eqref{curlbold-edge=face} and~\eqref{a-priori:bound:psiboldh} for 3D face virtual element spaces, we readily get
\[
\Vert \curlbold \vboldh \Vert_{0,\E} \lesssim \hE^{\frac{1}{2}} \sum_{\F \in \EE} \Vert  \curlbold \vboldh \cdot \nF \Vert_{0,\F} = \hE^{\frac{1}{2}} \sum_{\F \in \EE} \Vert  \rotF \vboldhF \Vert_{0, \F} .
\]
The following inverse estimate is valid:
\begin{equation} \label{inverse-rot}
\Vert  \rotF \vboldhF  \Vert_{0, \F} \lesssim \hE^{-1}  \Vert  \vboldhF \Vert_{0, \F}.
\end{equation}
Bound~\eqref{inverse-rot} can be shown using the same arguments employed to prove~\eqref{3D:inverse-curl} and~\eqref{2D:inverse-Laplacian}, the essential ingredient being the fact that~$\vboldhF$ is a 2D edge virtual element function,
which in turns implies that~$\rotF \vboldhF$ is constant over~$\F$.

We deduce
\[
\Vert \curlbold \vboldh \Vert_{0,\E} \lesssim  \hE^{-\frac{1}{2}} \sum_{\F \in \EE} \Vert  \vboldhF \Vert_{0, \F}.
\]
Assertion~\eqref{apriori-3Dedge} follows by inserting the above inequality on the right-hand side of the Friedrichs' inequality~\eqref{Friedrichs-Monk} and summing over the faces of~$\E$.
\end{proof}

We collect estimates~\eqref{a-priori:bound:vboldh2D} and~\eqref{apriori-3Dedge} in the following result, stating that the opportunely scaled degrees of freedom uniformly control the~$L^2$ norm of functions in~$\VzedgeE$.
\begin{cor}
The following a priori bound is valid for any~$\vboldh \in \VzedgeE$:
\begin{equation} \label{apriori-3Dedge-full}
\Vert  \vboldh  \Vert_{0,\E}^2 \lesssim \hE^2 \sum_{\F \in \EE} \sum_{\e\in \EF}  \Vert \vboldh \cdot \te \Vert_{0,\e}^2.
\end{equation}
\end{cor}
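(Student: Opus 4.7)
The plan is to chain the two a priori bounds~\eqref{a-priori:bound:vboldh2D} and~\eqref{apriori-3Dedge} together; no new analytic ingredient is needed beyond mesh regularity and a compatibility observation on tangential traces.

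First, I would square estimate~\eqref{apriori-3Dedge} and split the boundary norm face by face, obtaining
\[
\Vert \vboldh \Vert_{0,\E}^2 \lesssim \hE \sum_{\F\in\EE} \Vert \vboldhF \Vert_{0,\F}^2 .
\]
Next, I would observe that, by the very definition of $\VzedgeE$, the tangential trace $\vboldhF$ on each face $\F\in\EE$ belongs to the 2D edge space $\VzedgeF$. Hence I can apply Proposition~\ref{prop:splitting:VEM-2Dedge} face by face to bound
\[
\Vert \vboldhF \Vert_{0,\F}^2 \lesssim \hF \Vert \vboldhF \cdot \tpartialF \Vert_{0,\partial \F}^2 = \hF \sum_{\e\in\EF} \Vert \vboldhF \cdot \te \Vert_{0,\e}^2.
\]

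The only thing to verify carefully is the compatibility identity $\vboldhF \cdot \te = \vboldh \cdot \te$ on each edge $\e\in\EF$, which is immediate since $\te$ lies in the tangent plane of $\F$, so the normal component $\nF(\vboldh\cdot\nF)$ subtracted in the definition of $\vboldhF$ contributes nothing against $\te$. Combining the two displays, using the mesh regularity assumption $\hF \approx \hE$ for every $\F\in\EE$, yields
\[
\Vert \vboldh \Vert_{0,\E}^2 \lesssim \hE \sum_{\F\in\EE} \hF \sum_{\e\in\EF} \Vert \vboldh \cdot \te \Vert_{0,\e}^2 \lesssim \hE^2 \sum_{\F\in\EE} \sum_{\e\in\EF} \Vert \vboldh \cdot \te \Vert_{0,\e}^2,
\]
which is exactly~\eqref{apriori-3Dedge-full}.

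There is really no obstacle here: the corollary is just the concatenation of the previously proved 3D and 2D a priori bounds, whose nontrivial content (Helmholtz decompositions, enhancing constraint, inverse estimates on constant curls/Laplacians) has already been absorbed into Propositions~\ref{prop:splitting:VEM-2Dedge} and~\ref{proposition:apriori-3Dedge}. The only minor care points are the identification $\vboldhF\cdot\te=\vboldh\cdot\te$ and the use of the uniform size equivalence $\hF\approx\hE$ guaranteed by the mesh assumptions of Section~\ref{section:preliminaries}.
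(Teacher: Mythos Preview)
Your proof is correct and is exactly what the paper intends: the corollary is stated as collecting estimates~\eqref{a-priori:bound:vboldh2D} and~\eqref{apriori-3Dedge}, and you have spelled out precisely that concatenation, including the needed observations that $\vboldhF\in\VzedgeF$, that $\vboldhF\cdot\te=\vboldh\cdot\te$, and that $\hF\approx\hE$.
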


Next, we show interpolation properties for 3D edge virtual element spaces. To this aim, we need a preliminary technical result.
\begin{lem} \label{lemma:L1-boundary-estimate}
Let~$\F \in \Fcaln$. Then, for all~$\vbold \in [H^{\varepsilon}(\F)]^2 \cap H(\rotF, \F)$, with~$\varepsilon \in (0, 1/2]$, such that its tangent component on each edge is integrable, the following bound is valid:
\begin{equation} \label{tricky-bound-L1}
\sum_{\e \in\EF} \left\vert \int_\e \vbold \cdot \tpartialF \right\vert^2
\lesssim \Vert \vbold \Vert_{0,\F}^2 + \hF^{2\varepsilon} \vert \vbold \vert_{\varepsilon, \F}^2 + \hF^2 \Vert \rotF \vbold \Vert_{0,\F}^2.
\end{equation}
\end{lem}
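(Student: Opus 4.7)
The plan is to combine integration by parts on a shape-regular sub-triangulation of $F$ with trace-type inequalities, estimating each boundary edge contribution separately and summing.

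I would fix the shape-regular sub-triangulation of $F$ introduced in the proof of Lemma~\ref{lemma:polynomial-inverse}. For each boundary edge $\e \in \EF$, let $T_\e$ denote a sub-triangle having $\e$ as one of its sides, and let $\sigma_1^\e, \sigma_2^\e$ be the other two (interior) sides. Stokes' theorem on $T_\e$ then delivers
\[
\int_\e \vbold \cdot \tpartialF = \int_{T_\e} \rotF \vbold \;-\; \int_{\sigma_1^\e} \vbold \cdot \t \;-\; \int_{\sigma_2^\e} \vbold \cdot \t,
\]
where the last two terms are interpreted as duality pairings when a classical trace is unavailable.

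The volume term is handled by Cauchy--Schwarz: $\big|\int_{T_\e}\rotF\vbold\big|^2 \lesssim \hF^2\,\Vert\rotF\vbold\Vert_{0,T_\e}^2$. For the interior-segment contributions I would apply the scaled trace inequality~\eqref{Agmon} of Lemma~\ref{lemma:Agmon} on $T_\e$, combined with $\big|\int_{\sigma_i^\e}\vbold\cdot\t\big|^2 \le |\sigma_i^\e|\,\Vert\vbold\Vert_{0,\sigma_i^\e}^2$, to obtain $\big|\int_{\sigma_i^\e}\vbold\cdot\t\big|^2 \lesssim \Vert\vbold\Vert_{0,T_\e}^2 + \hF^{2\varepsilon}|\vbold|_{\varepsilon,T_\e}^2$. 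Squaring the Stokes identity, summing over boundary edges, and exploiting shape-regularity, so that each sub-triangle $T_\e$ and each interior segment $\sigma_i^\e$ is associated with only a bounded number of boundary edges, would then produce the target bound.

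The main obstacle is that~\eqref{Agmon} requires Sobolev regularity strictly greater than $1/2$, whereas the statement admits $\varepsilon \in (0,1/2]$; in this low-regularity regime $\vbold$ need not have an $L^2$ trace on an interior segment $\sigma_i^\e$, and the naive argument breaks down. To close the gap I would supplement the $H^\varepsilon$ regularity with the $H(\rotF)$ assumption through a refined tangent-trace estimate: interpolating between the $H(\rotF)$-trace~\eqref{rot-trace} and the standard trace of $H^{1/2+\delta}$ furnishes a bound of the shape
\[
\Vert\vbold\cdot\t\Vert_{H^{\varepsilon-1/2}(\partial T_\e)}^2 \lesssim \hF^{-2\varepsilon}\Vert\vbold\Vert_{0,T_\e}^2 + |\vbold|_{\varepsilon,T_\e}^2 + \hF^{2-2\varepsilon}\Vert\rotF\vbold\Vert_{0,T_\e}^2.
\]
Pairing this against $\chi_{\sigma_i^\e}$, viewed as an element of $H^{1/2-\varepsilon}(\partial T_\e)$ whose Gagliardo norm is readily computed to scale as $\hF^{2\varepsilon}$, recovers the interior-edge bound with precisely the three right-hand side terms of~\eqref{tricky-bound-L1}, thereby completing the proof.
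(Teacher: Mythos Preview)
Your overall architecture---sub-triangulate, localize to the triangle containing a given boundary edge, and exploit the $\rotF$ structure via an integration-by-parts---is the same as the paper's. You also correctly spot the obstruction: for $\varepsilon\le 1/2$ the function~$\vbold$ need not have an $L^2$ trace on an individual segment, so the naive Agmon-type estimate fails.

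The gap is in your proposed remedy. You claim the intermediate trace bound
\[
\Vert\vbold\cdot\t\Vert_{H^{\varepsilon-1/2}(\partial T_\e)}^2 \lesssim \hF^{-2\varepsilon}\Vert\vbold\Vert_{0,T_\e}^2 + |\vbold|_{\varepsilon,T_\e}^2 + \hF^{2-2\varepsilon}\Vert\rotF\vbold\Vert_{0,T_\e}^2
\]
follows by ``interpolating between the $H(\rotF)$-trace and the standard trace of $H^{1/2+\delta}$.'' But operator interpolation requires identifying the interpolation space between $H(\rotF,T_\e)$ and $[H^{1/2+\delta}(T_\e)]^2$; spaces carrying a differential constraint do not interpolate against a Sobolev scale in any standard way, and $[H(\rotF),H^1]_\theta = H^\theta\cap H(\rotF)$ is not a textbook identity. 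The statement you want may well be true, but it is a result to be proved, not invoked.

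The paper sidesteps this entirely. Rather than estimating $\vbold\cdot\t$ in a fractional boundary norm and pairing against $\chi$, it lifts the indicator $\chi_{\ehat}$ to a function $\what\in W^{1,p'}(\That)$ with $p>2$ (possible because $W^{1/p,p'}(\partial\That)$ admits piecewise discontinuous functions in that range), integrates by parts once inside~$\That$, and applies H\"older to get $|\int_\e\vbold\cdot\te|\lesssim \hF^{1-2/p}\Vert\vbold\Vert_{L^p(T)}+\hF\Vert\rotF\vbold\Vert_{0,T}$. Choosing $p=2/(1-\varepsilon)$ and invoking the Sobolev embedding $H^\varepsilon(\F)\hookrightarrow L^p(\F)$ finishes the argument. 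This route is more elementary and completely self-contained; note also that it makes your Stokes detour through the interior edges $\sigma_i^\e$ unnecessary, since the lifting trick can be applied directly to the boundary edge~$\e$.
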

\begin{proof}
The first and main step in the proof is showing the following trace-type inequality: for all fixed~$p>2$,
\begin{equation}\label{eq:int-to-inside}
\sum_{\e \in\EF} \left\vert \int_\e \vbold \cdot \tpartialF \right\vert
\lesssim \hF^{1-\frac{2}{p}} \Vert \vbold \Vert_{L^p(\F)} + \hF \Vert \rotF \vbold \Vert_{0,\F}.
\end{equation}
We split the face~$\F$ into a shape-regular triangulation~$\tautilden(\F)$, e.g., as in the proof of Lemma~\ref{lemma:polynomial-inverse}.
It suffices to prove, for all~$\e \in \EF$,
\begin{equation} \label{it-suffices-to-prove}
\left\vert \int_\e \vbold \cdot \te \right\vert
\lesssim \hF^{1-\frac2p} \Vert \vbold \Vert_{L^p(T)} + \hF \Vert \rotF \vbold \Vert_{0,T},
\end{equation}
where~$T$ is the only triangle in~$\tautilden(\F)$ such that~$\e \subset \partial T$ (and we recall $h_T \sim\hF$).

Let~$\That$ be the reference triangle, $T$ be mapped to~$\That$ through the (rotated) Piola transform, and~$\ehat$ be the edge of~$\That$ corresponding to the edge~$\e$ through the (rotated) Piola transform.
The trace theorem on Lipschitz domains states that the trace operator is surjective from~$W^{1,p'}(\That)$ to~$W^{\frac1p, p'}(\partial \That)$.
Further, the space~$W^{\frac1p,p'}(\partial \That)$ contains piecewise discontinuous functions over~$\partial \That$ for~$p>2$.
In particular, there exists a function~$\what$ such that~$\what =1$ on~$\ehat$, $\what=0$ on~$\partial \That \setminus \ehat$,
and~$\Vert \what \Vert_{W^{1,p'}(\That)}<\infty$ (we pick one among the many).

Next, we use a scaling argument (the $\widehat \cdot$ denoting the usual pull-back),
an integration by parts, the H\"older inequality, and simple manipulations, and get
\[
\begin{split}
\left\vert \int_\e \vbold\cdot \te \right\vert
& \lesssim \hF \left\vert \int_{\ehat} \vboldhat \cdot \thatehat \right\vert
  = \hF  \left\vert \int_{\partial\That} (\vboldhat \cdot \thatehat) \what \right\vert 
  = \hF \left\vert \int_{\That} \rothatFhat \vboldhat \ \what
        + \int_{\That} \vboldhat \cdot \widehat{\curlbold}_{\hat F} \what \right\vert \\
& \lesssim \hF \left( \Vert \rothatFhat \vboldhat \Vert_{0,\That} \Vert \what \Vert_{0,\That} 
  + \Vert \vboldhat \Vert_{L^p(\That)} \vert \what \vert_{W^{1,p'}(\That)} \right)\\
& \lesssim \hF \left( \Vert \vboldhat \Vert_{L^p(\That)} + \Vert \rothatFhat \vboldhat \Vert_{0,\That} \right) \Vert \what \Vert_{W^{1,p'}(\That)}
  \lesssim \hF^{1-\frac2p} \Vert \vbold \Vert_{L^p(T)} + \hF \Vert \rotF \vbold \Vert_{0,T}. 
\end{split}
\]
We have shown \eqref{eq:int-to-inside}. The assertion finally follows by picking~$p = 2/(1-\varepsilon) > 2$ in \eqref{eq:int-to-inside}, using the embedding~$H^\varepsilon(\F) \hookrightarrow  L^p(\F)$, and performing standard manipulations.
\end{proof}

Introduce the following scaled Sobolev norms on faces: given~$\varepsilon>0$,
\begin{equation} \label{scaled-norm-faces}
\Vertiii{\cdot}_{\varepsilon, \F}^2 := \Vert \cdot \Vert^2_{0,\F} + \hF^{2\varepsilon} \vert \cdot \vert_{\varepsilon,\F}^2 \quad \quad \forall \F \in \Fcaln.
\end{equation}
The norm in~\eqref{scaled-norm-faces} is defined so that it scales like an $L^2$ norm on faces.

We are now in the position of showing the following interpolation result.

\begin{prop} \label{prop:best-interpolation-edge}
For all~$\E \in \taun$, let~$\vbold \in H^s(\curlbold,\E)$, $1/2 < s \le 1$, such that its tangent component on each edge is integrable.
Then, there exists~$\vboldI \in \VzedgeE$ such that
\[
\Vert \vbold - \vboldI \Vert _{0,\E} \lesssim \hE^{s} \vert \vbold \vert_{s, \E} + \hE \Vert \curlbold \vbold \Vert _{0,\E} + \hE^{s + 1} \vert \curlbold \vbold \vert_{s, \E} \quad \quad \forall \E \in \taun.
\]
\end{prop}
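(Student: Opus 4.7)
My plan is to mirror the structure of Proposition~\ref{prop:best-interpolation-face}. First, I define $\vboldI \in \VzedgeE$ through the edge degrees of freedom
$$
\int_\e (\vbold - \vboldI) \cdot \te = 0 \quad \text{for every edge } \e \text{ of } \E,
$$
which are well defined by the edge integrability hypothesis. Let $\vboldpi$ denote the vector of the componentwise averages of $\vbold$ over~$\E$, and apply the triangle inequality $\Vert \vbold - \vboldI \Vert_{0,\E} \le \Vert \vbold - \vboldpi \Vert_{0,\E} + \Vert \vboldpi - \vboldI \Vert_{0,\E}$. The first summand will be bounded by $\hE^s \vert \vbold \vert_{s,\E}$ via Poincar\'e combined with interpolation theory, matching the first term of the claim; all the curl contributions to the final estimate will have to come from the second summand.

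Constant vectors belong to~$\VzedgeE$ (the two curl conditions are trivially satisfied, and the tangential trace on each face is a constant 2D vector whose enhancing constraint~\eqref{enhancing:edge2D} holds because $\int_\F \xboldF = 0$), so $\vboldpi - \vboldI \in \VzedgeE$ and the a priori estimate~\eqref{apriori-3Dedge-full} is at my disposal. Moreover, for every $\vboldh \in \VzedgeE$ the quantity $\vboldh \cdot \te$ is constant on each edge (a consequence of $\vboldhF \in \VzedgeF$), so the definition of $\vboldI$ yields the pointwise value $(\vboldI - \vboldpi) \cdot \te = \he^{-1} \int_\e (\vbold - \vboldpi) \cdot \te$ on each~$\e$. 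Substituting into~\eqref{apriori-3Dedge-full} and invoking $\he \approx \hE$, I obtain
$$
\Vert \vboldpi - \vboldI \Vert^2_{0,\E} \lesssim \hE \sum_{\F \in \EE} \sum_{\e \in \EF} \left\vert \int_\e (\vbold - \vboldpi) \cdot \te \right\vert^2.
$$
Face by face, I then apply Lemma~\ref{lemma:L1-boundary-estimate} with $\varepsilon = s - 1/2 \in (0,1/2]$ to the tangential component of $\vbold - \vboldpi$, bounding each inner sum by $\Vertiii{\vbold - \vboldpi}^2_{\varepsilon,\F} + \hF^2 \Vert \curlbold \vbold \cdot \nF \Vert^2_{0,\F}$; here I use that the~$\rotF$ of the tangential component coincides with $\curlbold \vbold \cdot \nF$, the curl of the constant $\vboldpi$ vanishing.

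The final step is to pass from face to bulk quantities via trace inequalities. For the $L^2$ part of $\Vertiii{\cdot}^2_{\varepsilon,\F}$ I will use the zero-average Agmon estimate~\eqref{Agmon:zero-average} with exponent $s$; for the fractional-seminorm part, the standard scaled trace $H^s(\E) \hookrightarrow H^{s-1/2}(\F)$, combined with the prefactor $\hF^{2\varepsilon}$, is dimensionally $\hE$-balanced and yields $\sum_\F \Vertiii{\vbold - \vboldpi}^2_{\varepsilon,\F} \lesssim \hE^{2s-1} \vert \vbold \vert^2_{s,\E}$. The remaining boundary term is treated by applying~\eqref{Agmon} to the scalar $\curlbold \vbold \cdot \nE$ on~$\partial \E$, producing $\sum_\F \hF^2 \Vert \curlbold \vbold \cdot \nF \Vert^2_{0,\F} \lesssim \hE \Vert \curlbold \vbold \Vert^2_{0,\E} + \hE^{2s+1} \vert \curlbold \vbold \vert^2_{s,\E}$. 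Inserting these bounds, multiplying by the~$\hE$ prefactor, and taking square roots produces the remaining two terms of the claim. The hardest point will be calibrating the $\hE$ powers so that they telescope exactly into $\hE^s$, $\hE$, and $\hE^{s+1}$; a secondary but crucial preliminary is verifying that constant vectors are admissible in~$\VzedgeE$, since the entire argument rests on applying~\eqref{apriori-3Dedge-full} to $\vboldpi - \vboldI$.
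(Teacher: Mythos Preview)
Your proposal is correct and follows essentially the same approach as the paper: define~$\vboldI$ via edge integrals, split through the constant average~$\vboldpi$, invoke~\eqref{apriori-3Dedge-full} on the virtual difference, reduce to edge integrals of~$\vbold-\vboldpi$, apply Lemma~\ref{lemma:L1-boundary-estimate} with~$\varepsilon=s-1/2$, and lift the resulting face norms to bulk via scaled trace/Poincar\'e estimates and~\eqref{Agmon} on~$\curlbold\vbold$. The only cosmetic difference is that you are slightly more explicit about verifying~$[\Pbb_0(\E)]^3 \subset \VzedgeE$ and about separating the~$L^2$ and~$H^\varepsilon$ parts of~$\Vertiii{\cdot}_{\varepsilon,\F}$.
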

\begin{proof}
Define~$\vboldI$ as the interpolant in~$\VzedgeE$ of~$\vbold$, i.e.,
\begin{equation} \label{edge-interpolant}
\int_\e (\vbold - \vboldI) \cdot \widetilde{\te} = 0 \quad \quad \forall \text{ edges~$\e$ of~$\E$}.
\end{equation}
It is easy to check that the integral in~\eqref{edge-interpolant} is well defined for~$\vbold \in H^s(\curlbold,\E)$, $s > 1/2$.
Let~$\vboldpi \in [\Pbb_0(\E)]^3$ be the vector average of~$\vbold$ over~$\E$, which is contained in $\VzedgeE$. 
Applying the triangle inequality and~\eqref{apriori-3Dedge-full}, we get
\[
\Vert \vbold - \vboldI \Vert^2_{0,\E} \lesssim \Vert \vbold - \vboldpi \Vert^2_{0,\E} + \hE^2 \sum_{\F \in \EE} \sum_{\e\in \EF} \Vert (\vboldI - \vboldpi) \cdot \te   \Vert_{0,\e}^2.
\]
Using that~$\vboldI \cdot \te$ is constant on each edge~$\e$ and~\eqref{edge-interpolant}, we write
\[
\Vert (\vboldI - \vboldpi) \cdot \te   \Vert_{0,\e}^2
= \frac{1}{\he} \left| \int_\e (\vboldI - \vboldpi) \cdot \te \right|^2
= \frac{1}{\he} \left| \int_\e (\vbold - \vboldpi) \cdot \te \right|^2.
\]
Combining the two above equations and using $\he \sim \hE$ entail
\begin{equation} \label{splitting:T1-T2-T3}
\Vert \vbold - \vboldI \Vert^2_{0,\E} 
\lesssim \Vert \vbold - \vboldpi \Vert^2_{0,\E} + \hE \sum_{\F \in \EE} \sum_{\e\in \EF} \left| \int_\e (\vbold - \vboldpi) \cdot \te \right|^2
=: T_1 + T_2.
\end{equation}
We start by showing an upper bound for the term $T_2$. Given~$\varepsilon>0$, applying~\eqref{tricky-bound-L1}, a (scaled) trace inequality, and the Poincar\'e inequality together with interpolation theory, we get
\begin{equation} \label{initial:bound:T2}
\begin{split}
T_2 
& \lesssim \hE \sum_{\F \in \EE} \left( \Vertiii{\vbold - \vboldpi}^2_{\varepsilon,\F} + \hE^2 \Vert \rotF \vboldhF \Vert_{0,\F}^2  \right) \\
& \lesssim \hE^{1+2 \varepsilon} \vert \vbold \vert^2_{1/2 + \varepsilon, \E} + \hE^3 \sum_{\F \in \EE} \Vert \rotF \vboldhF \Vert^2_{0,\F} ,
\end{split}
\end{equation}
where the scaled norm~$\Vertiii{\cdot}_{\varepsilon, \F}$ is defined in~\eqref{scaled-norm-faces}.

We bound the second term on the right-hand side of~\eqref{initial:bound:T2}. Upon rewriting the~$\rotF$ operator on each face as the normal component of the~$\curlbold$ operator, we apply~\eqref{Agmon} and deduce
\begin{equation} \label{bound:rot}
\hE^3 \sum_{\F \in \EE} \Vert \rotF \vboldhF \Vert^2_{0,\F} \lesssim \hE^2 \Vert \curlbold \vbold \Vert^2_{0,\E} + \hE^{2\varepsilon + 3} \vert \curlbold \vbold \vert_{\varepsilon+1/2,\E}^2 .
\end{equation}
Inserting~\eqref{bound:rot} into~\eqref{initial:bound:T2} and choosing~$s=\varepsilon + 1/2$ yield
\begin{equation} \label{bound:T2}
T_2 \lesssim \hE^{2 s} \vert \vbold \vert^2_{s, \E} +   \hE^{2} \Vert \curlbold \vbold \Vert^2_{0,\E}  + \hE^{2+2s} \vert \curlbold \vbold \vert_{s,\E}^2 .
\end{equation}
The assertion follows by collecting~\eqref{bound:T2} in~\eqref{splitting:T1-T2-T3} and applying the Poincar\'e inequality on the term~$T_1$.

\end{proof}

Although the result in Proposition \ref{prop:best-interpolation-edge} requires some ``additional regularity'' for the objective function, this is essentially needed in order to make the interpolation operator well defined.
Combining such bound with an approximation estimate for the $\curlbold$, yield estimates in the $H_{\curlbold}$ norm that are optimal also with respect to the required regularity, as shown in the corollary below.

\begin{cor}\label{corcorcor} 
For all~$\E \in \taun$, let~$\vbold \in H^s(\curlbold,\E)$, $ 1/2 < s \le 1$ such that  its tangent component on each edge is integrable.
Then, the following bound is valid:
$$
\Vert \curlbold (\vbold - \vboldI) \Vert_{0,\E} \lesssim  \hE^s \vert  \curlbold \vbold \vert_{s,\E}.
$$
Such result, combined with Proposition \ref{prop:best-interpolation-edge}, yields an optimal $O(\hE^s)$ interpolation estimate in the $H_{\curlbold}$ norm.
\end{cor}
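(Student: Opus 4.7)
The plan is to show that the $\curlbold$ operator commutes with the interpolation operator, in the sense that $\curlbold \vboldI$ equals the face interpolant (in the sense of Proposition~\ref{prop:best-interpolation-face}) of $\curlbold \vbold$. Once this commuting-diagram property is in place, the desired estimate follows at once from Proposition~\ref{prop:best-interpolation-face} applied to $\psibold := \curlbold \vbold \in [H^s(\E)]^3$, which is a legitimate input since $s>1/2$.

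First, I would record that, by the exact-sequence property~\eqref{curlbold-edge=face}, one has $\curlbold \vboldI \in \VzfaceE$ and $\div(\curlbold \vboldI)=0$. In particular, $\curlbold \vboldI \cdot \nF$ is constant on each face $\F \in \EE$, so $\curlbold \vboldI$ is fully determined inside $\VzfaceE$ by the numbers $\int_\F \curlbold \vboldI \cdot \nF$, $\F \in \EE$.

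Second, I would match these face moments with those of $\curlbold \vbold$ via Stokes' theorem on each face. For $\F \in \EE$,
\[
\int_\F \curlbold \vboldI \cdot \nF \;=\; \int_{\partial \F} \vboldI \cdot \tpartialF \;=\; \sum_{\e \in \EF} \int_\e \vboldI \cdot \te,
\]
and the analogous identity holds with $\vbold$ in place of $\vboldI$. Since $\te$ and the globally-chosen tangent $\widetilde \te$ differ at worst by a sign on each edge, the defining condition~\eqref{edge-interpolant} of the edge interpolant gives $\int_\e \vboldI \cdot \te = \int_\e \vbold \cdot \te$ for every $\e \in \EF$, whence
\[
\int_\F \curlbold \vboldI \cdot \nF \;=\; \int_\F \curlbold \vbold \cdot \nF \qquad \forall \F \in \EE.
\]
By construction~\eqref{definition:face-interpolant}, this identifies $\curlbold \vboldI$ with the face interpolant $(\curlbold \vbold)_I \in \VzfaceE$ of $\curlbold \vbold$.

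Third, I invoke Proposition~\ref{prop:best-interpolation-face} with $\psibold = \curlbold \vbold$ (note that $\curlbold \vbold \in [H^s(\E)]^3$ with $s>1/2$, so the face interpolant is well-defined), obtaining
\[
\Vert \curlbold(\vbold - \vboldI)\Vert_{0,\E} \;=\; \Vert \curlbold \vbold - (\curlbold \vbold)_I \Vert_{0,\E} \;\lesssim\; \hE^s \, \vert \curlbold \vbold \vert_{s,\E},
\]
which is the claim. The only delicate point is the commuting-diagram step, and even there the bookkeeping is routine once one is careful with the sign convention between the face-wise tangent $\te$ and the globally chosen $\widetilde \te$; the analytical work has been entirely absorbed into Proposition~\ref{prop:best-interpolation-face}.
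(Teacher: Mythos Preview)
Your proposal is correct and follows essentially the same approach as the paper: establish the commuting-diagram identity $\curlbold \vboldI = (\curlbold \vbold)_I$ and then invoke Proposition~\ref{prop:best-interpolation-face} with $\psibold = \curlbold \vbold$. The paper simply asserts the commuting property as ``easy to check'' from~\eqref{curlbold-edge=face} and the definitions of the two interpolants, whereas you spell out the Stokes-theorem verification on each face (including the sign bookkeeping between $\te$ and $\widetilde\te$), which is precisely the routine check the paper omits.
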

\begin{proof}
Thanks to the exact sequence property~\eqref{curlbold-edge=face} and the definition of the interpolation operators, it is easy to check that the following commuting diagram property is valid.
For all $\vbold \in H^s(\curlbold,\E)$, $ 1/2 < s \le 1$,
\[
\curlbold \vboldI = ( \curlbold \vbold )_I \, ,   
\]
where the interpolation on the left-hand side is into~$\VzedgeE$ and the interpolation on the right-hand side is into~$\VzfaceE$.
The result follows from the above identity and Proposition~\ref{prop:best-interpolation-face} with~$\psibold = \curlbold \vbold$.
\end{proof}

\begin{remark} \label{remark:curl-interpolation}
The condition that the tangent component of ${\bf v}$ is integrable on each edge, appearing in Lemma \ref{lemma:L1-boundary-estimate}, Proposition \ref{prop:best-interpolation-edge}, and Corollary \ref{corcorcor},
is the minimal one in order to define the interpolant ${\bf v}_I$ in~\eqref{edge-interpolant}
by simple edge integrals.
We could avoid such a condition by interpreting~$\int_\e {\bf v}\cdot \te$ in the dual sense rather than as an integral,
i.e., $\langle {\bf v}\cdot \te , \chi_e\rangle_{W^{1/p,p'}(\partial \F)}$ with $\chi_e: \partial \F \rightarrow {\mathbb R}$ equal to~$1$ on~$e$ and vanishing on~$\partial \F \setminus \e$.
\end{remark}

\section{Stability properties of discrete scalar products}  \label{section:stabilizations}

Face and edge virtual element functions are typically associated with the computation of $L^2$ inner products over elements.
For instance, this is the case for the discretization of magnetostatic problems~\cite{da2018lowest, da2018family} and the Maxwell's equations~\cite{MaxwellVEM}.
Since functions in~$\VzfaceE$ and~$\VzedgeE$ are not available in closed-form, we address the computation of $L^2$ bilinear forms following the VEM gospel~\cite{VEMvolley}.
The standard VEM construction of $L^2$ scalar products is as follows: given any two (face or edge) virtual element functions~$\uboldh$ and~$\vboldh$,
\[
(\uboldh, \vboldh) _{0,\E} \approx  ( \Piboldz  \uboldh, \Piboldz \vboldh) _{0,\E} + \SE((\Ibold - \Piboldz) \uboldh, (\Ibold - \Piboldz) \vboldh),
\]
where~$\Piboldz$ is an $L^2$-orthogonal projector into a polynomial space, $\Ibold$ is the vector identity operator,
and~$\SE$ is a bilinear form scaling like the $L^2$ norm on the virtual element functions.
More precisely, it is required the existence of two positive constants $\alpha_* \le \alpha^*$ such that, for all functions~$\vboldh$ in the virtual element space under consideration,
\begin{equation} \label{bound:+}
\alpha_* \Vert \vboldh \Vert_{0,\E}^2 \le \SE( \vboldh, \vboldh) \le \alpha^* \Vert \vboldh \Vert_{0,\E} ^2.
\end{equation}
We require the projections~$\Piboldz$ and the stabilizations~$\SE(\cdot, \cdot)$ to be computable via the degrees of freedom of the virtual element space under consideration.

In the literature of the VEM, bounds of the form~\eqref{bound:+} have been analyzed for nodal elements~\cite{brennerVEMsmall, BrennerGuanSung_someestimatesVEM, beiraolovadinarusso_stabilityVEM, chen_anisotropic_conforming, chen_anisotropic_nonconforming},
but no result exists for face and edge elements.
Here, we derive an \emph{explicit} analysis of the stability bounds~\eqref{bound:+} for VE face and edge spaces.
We cope with face and edge virtual element spaces in Sections~\ref{subsection:stab-face} and~\ref{subsection:stab-edge} below, respectively.

For future convenience, we state here a preliminary, technical result.
\begin{lem} \label{lemma:inverse1D}
Given a face~$\F$, let~$\vboldh \in \VzedgeF$. Then, the following inverse estimate is valid:
\begin{equation} \label{inverse1D}
\Vert \vboldh \cdot \tpartialF \Vert_{0,\partial \F} \lesssim \hF^{-\frac{1}{2}} \Vert \vboldh \cdot \tpartialF \Vert_{-\frac{1}{2}, \partial \F},
\end{equation}
where as usual~$\tpartialF{}_{|\e}:=\te$ and~$\Vert \cdot \Vert_{-\frac12,\partial\F}$ is defined in~\eqref{H-1/2:norm}.
\end{lem}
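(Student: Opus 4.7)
The plan is to mimic the proof of the inverse estimate~\eqref{L2-H1/2} in Lemma~\ref{lemma:polynomial-inverse}, with the role of the faces of~$\partial \E$ now played by the edges of~$\partial \F$ and the dimension lowered by one. The essential observation is that, for $\vboldh \in \VzedgeF$, the tangential component $\vboldh \cdot \te$ is constant on each edge $\e \in \EF$, so that $q := \vboldh \cdot \tpartialF$ is a piecewise (edge by edge) constant polynomial over the 1D manifold $\partial \F$.

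On each edge $\e \in \EF$, I would introduce the quadratic bubble $b_\e$ vanishing at the endpoints of $\e$ and normalized so that $\Vert b_\e \Vert_{\infty, \e} = 1$. Since $b_\e$ vanishes at the vertices of $\partial \F$, the piecewise definition $b_{\partial \F}|_\e := b_\e$ produces a continuous function $b_{\partial \F} \in H^1_0(\partial \F) \subset H^{1/2}(\partial \F)$. A standard bubble argument edge by edge, combined with $\he \approx \hF$, yields
\[
\Vert b_{\partial \F}\, q \Vert_{0,\partial \F} \approx \Vert b_{\partial \F}^{1/2}\, q \Vert_{0,\partial \F} \approx \Vert q \Vert_{0,\partial \F},
\]
whereas the 1D polynomial inverse inequality applied edge-wise gives $|b_{\partial \F}\, q|_{1,\partial \F} \lesssim \hF^{-1} \Vert b_{\partial \F}\, q \Vert_{0,\partial \F}$. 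Interpolating between $L^2(\partial \F)$ and $H^1(\partial \F)$ then supplies the intermediate bound $|b_{\partial \F}\, q|_{1/2,\partial \F} \lesssim \hF^{-1/2} \Vert b_{\partial \F}\, q \Vert_{0,\partial \F}$, in complete analogy with~\eqref{second-step-inverse1D}.

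I would conclude by testing the scaled negative norm in~\eqref{H-1/2:norm} (adapted to $\partial \F$) with the admissible choice $\varphi := b_{\partial \F}\, q \in H^{1/2}(\partial \F)$:
\[
\Vert q \Vert_{-\frac{1}{2},\partial \F} \ge \frac{(b_{\partial \F}\, q, q)_{0,\partial \F}}{|b_{\partial \F}\, q|_{\frac{1}{2},\partial \F} + \hF^{-\frac{1}{2}} \Vert b_{\partial \F}\, q \Vert_{0,\partial \F}} \gtrsim \hF^{\frac{1}{2}} \Vert q \Vert_{0,\partial \F},
\]
which, after rearrangement, is exactly~\eqref{inverse1D}. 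The only mildly delicate point is the verification that $b_{\partial \F}$ has the correct regularity on the 1D manifold $\partial \F$: continuity at the vertices of $\F$, secured by the vanishing of each $b_\e$ at the endpoints of $\e$, places $b_{\partial \F}$ in $H^1_0(\partial \F)$, after which interpolation theory delivers the $H^{1/2}$ bound with the right $\hF$-scaling. Everything else is a routine 1D adaptation of the argument already carried out for~\eqref{L2-H1/2}.
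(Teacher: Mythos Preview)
Your proposal is correct and follows exactly the approach the paper has in mind: the paper's proof of this lemma is a one-line reference to the techniques of Lemma~\ref{lemma:polynomial-inverse} (specifically the argument for~\eqref{L2-H1/2}), carried out in one dimension lower, and you have simply written out those details with the appropriate edgewise quadratic bubbles in place of the facewise cubic ones.
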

\begin{proof}
It suffices to recall that~$\vboldh \cdot \tpartialF $ is piecewise constant over each edge~$e \in \EF$ and apply essentially the same techniques as in Lemma~\ref{lemma:polynomial-inverse}, in 2D instead of in 3D.
\end{proof}

\subsection{Stability bounds for face virtual elements} \label{subsection:stab-face}
As for face virtual elements, consider the $L^2$ projector~$\Piboldface: \VzfaceE \rightarrow [\Pbb_0(\E)]^3$, which is computable via the degrees of freedom, i.e., the value of the normal component over faces.
Consider the stabilization, cf. \cite[equation~$(4.17)$]{da2018lowest},
\begin{equation} \label{stabilization:face}
\SEface(\psiboldh, \phiboldh) := \hE \sum_{\F \in \EE} (\psiboldh \cdot \nF, \phiboldh \cdot \nF)_{0,\F} \quad \quad \forall \psiboldh, \phiboldh \in \VzfaceE.
\end{equation}
The stabilization~$\SEface(\cdot, \cdot)$ is explicitly computable via the degrees of freedom since $\psiboldh |_F \cdot \nF{}$ and $\phiboldh |_F \cdot \nF{}$ belong to~$\Pbb_0(\F)$ for all faces~$\F$.

We prove the following stability result.
\begin{prop} \label{proposition:stab-face}
Given~$\SEface$ the stabilization in~\eqref{stabilization:face}, the following bounds are valid:
\begin{equation} \label{stability-bounds-faces}
\alpha_*\Vert  \psiboldh  \Vert^2_{0,\E} \le   \SEface(\psiboldh, \psiboldh)   \le   \alpha^* \Vert  \psiboldh  \Vert^2_{0,\E}     \quad \quad \forall \psiboldh \in \VzfaceE,
\end{equation}
where~$\alpha_*$ and~$\alpha^*$ are two positive constants independent of~$\hE$.
\end{prop}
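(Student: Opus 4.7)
The bound $\SEface(\psiboldh,\psiboldh) = \hE \Vert \psiboldh\cdot\nE\Vert_{0,\partial\E}^2$, so \eqref{stability-bounds-faces} reduces to a two-sided equivalence between $\Vert\psiboldh\Vert_{0,\E}^2$ and $\hE \Vert\psiboldh\cdot\nE\Vert_{0,\partial\E}^2$. The lower bound (i.e.\ $\alpha_*\Vert\psiboldh\Vert_{0,\E}^2 \le \SEface(\psiboldh,\psiboldh)$) is \emph{exactly} the content of Proposition~\ref{prop:splitting:VEM-face}, so I would simply invoke~\eqref{a-priori:bound:psiboldh} and square both sides. All of the work is in the upper bound.

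For the upper bound, the plan is to reverse-engineer the boundary norm $\Vert\psiboldh\cdot\nE\Vert_{0,\partial\E}$ using the two inverse/trace tools already proved in Section~\ref{section:preliminaries}. Since $\psiboldh{}_{|\F}\cdot\nF \in \Pbb_0(\F)$ for every face, it is a piecewise (face by face) polynomial on $\partial\E$, and the inverse inequality~\eqref{L2-H1/2} yields
\[
\Vert \psiboldh\cdot\nE\Vert_{0,\partial\E} \lesssim \hE^{-\frac12}\,\Vert \psiboldh\cdot\nE\Vert_{-\frac12,\partial\E}.
\]
The divergence-trace inequality~\eqref{div-trace} then gives
\[
\Vert \psiboldh\cdot\nE\Vert_{-\frac12,\partial\E} \lesssim \Vert\psiboldh\Vert_{0,\E} + \hE\,\Vert\div\psiboldh\Vert_{0,\E}.
\]
Combining these two, squaring, and multiplying by $\hE$ will deliver $\SEface(\psiboldh,\psiboldh) \lesssim \Vert\psiboldh\Vert_{0,\E}^2 + \hE^2\Vert\div\psiboldh\Vert_{0,\E}^2$, which reduces the task to the inverse-type estimate
\begin{equation}\label{plan:inverse-div}
\hE\,\Vert\div\psiboldh\Vert_{0,\E} \lesssim \Vert\psiboldh\Vert_{0,\E}.
\end{equation}

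The technical heart of the argument is thus~\eqref{plan:inverse-div}, and I would establish it by the same bubble-function strategy used in the proof of~\eqref{3D:inverse-curl}. Set $c_0 := \div\psiboldh \in \Pbb_0(\E)$, and let $\bE$ be the piecewise quartic bubble from Lemma~\ref{lemma:polynomial-inverse}, which lies in $H^1_0(\E)$ and satisfies $\int_\E \bE \approx \vert\E\vert \approx \hE^3$. Using the equivalence of norms on constants, then integrating by parts against $\bE$ and exploiting the vanishing boundary trace, I obtain
\[
\Vert c_0\Vert_{0,\E}^2 \approx c_0^2 \int_\E \bE
= c_0 \int_\E \bE \,\div\psiboldh
= - c_0 \int_\E \nabla \bE \cdot \psiboldh
\le \vert c_0\vert \,\Vert\nabla\bE\Vert_{0,\E}\,\Vert\psiboldh\Vert_{0,\E}.
\]
The polynomial inverse estimate~\eqref{H1-L2} applied tetrahedron by tetrahedron gives $\Vert\nabla\bE\Vert_{0,\E} \lesssim \hE^{-1}\Vert\bE\Vert_{0,\E} \lesssim \hE^{1/2}$, and together with $\vert c_0\vert \approx \hE^{-3/2}\Vert\div\psiboldh\Vert_{0,\E}$ this rearranges to~\eqref{plan:inverse-div}. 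The main obstacle is precisely this step: since $\div\psiboldh$ is only a constant (no interior polynomial structure to exploit), one cannot use a plain inverse estimate on $\psiboldh$ itself, and it is essential that the bubble $\bE$ has enough regularity and vanishing boundary trace to convert the divergence into a gradient landing on $\psiboldh$. Once~\eqref{plan:inverse-div} is in hand, the chain of inequalities above yields the upper bound with an explicit constant depending only on mesh regularity.
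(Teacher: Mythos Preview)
Your proposal is correct and follows essentially the same route as the paper: the lower bound is Proposition~\ref{prop:splitting:VEM-face}, and the upper bound chains the polynomial inverse estimate~\eqref{L2-H1/2} with the $\div$-trace inequality~\eqref{div-trace}, then closes with the inverse estimate~\eqref{plan:inverse-div} via the bubble argument of~\eqref{3D:inverse-curl}. The only difference is that the paper merely refers back to the proofs of~\eqref{3D:inverse-curl}, \eqref{2D:inverse-Laplacian}, and~\eqref{inverse-rot} for the last step, whereas you spell out the bubble computation explicitly.
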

\begin{proof}
The lower bound in~\eqref{stability-bounds-faces} follows immediately from~\eqref{a-priori:bound:psiboldh}.

As for the upper bound in~\eqref{stability-bounds-faces}, we apply the polynomial inverse inequality~\eqref{L2-H1/2}, the $\div$-trace inequality~\eqref{div-trace}, and get
\begin{equation} \label{one-step:stability-face}
\Vert \psiboldh \cdot \nE \Vert_{0,\partial \E} \lesssim \hE^{-\frac{1}{2}} \Vert \psiboldh \cdot \nE \Vert_{-\frac{1}{2}, \partial \E} \lesssim \hE^{-\frac{1}{2}} \Vert \psiboldh \Vert_{0,\E} + \hE^{\frac{1}{2}} \Vert \div \psiboldh \Vert_{0,\E}.
\end{equation}
The following inverse inequality is valid:
\begin{equation} \label{inverse-ineq-div}
\Vert \div \psiboldh \Vert_{0,\E}  \lesssim \hE^{-1} \Vert \psiboldh \Vert_{0,\E}.
\end{equation}
To prove~\eqref{inverse-ineq-div}, it suffices to recall that~$\div \psiboldh \in \Pbb_0(\E)$ and proceed as in the proof of inverse estimates~\eqref{3D:inverse-curl}, \eqref{2D:inverse-Laplacian}, and~\eqref{inverse-rot}.
Inserting~\eqref{inverse-ineq-div} in~\eqref{one-step:stability-face}, we get the assertion.
\end{proof}

\subsection{Stability bounds for edge virtual elements} \label{subsection:stab-edge}
As for edge virtual elements, consider the $L^2$ projector~$\Piboldedge: \VzedgeE \rightarrow [\Pbb_0(\E)]^3$, which is computable via the degrees of freedom, i.e., the value of the tangential component over edges, as shown in~\cite{da2018lowest}.
Consider the stabilization, cf.~\cite[equation~$(4.8)$]{da2018lowest}
\begin{equation} \label{stabilization:edge}
\SEedge(\uboldh, \vboldh) := \hE^2 \sum_{\F \in \EE} \sum_{\e \in \EF} (\uboldh \cdot \te, \vboldh \cdot \te)_{0,\e} \quad \quad \forall \uboldh, \vboldh \in \VzedgeE.
\end{equation}
The stabilization~$\SEedge(\cdot, \cdot)$ is explicitly computable via the degrees of freedom since~$\uboldh \cdot \te$ and~$\vboldh \cdot \te$ belong to~$\Pbb_0(\e)$ for all edges~$\e$.

Before proving the stability result for the stabilization in~\eqref{stabilization:edge}, we show two critical preliminary results dealing with inverse estimates for functions in face and edge virtual element spaces.

\begin{lem} \label{lemma:hard-inverse-estimate}
The following inverse estimates in 2D face virtual element spaces is valid: for all faces~$\F \in \Fcaln$,
\begin{equation} \label{inverse:L2-H-1:2D-face}
\Vert \psiboldh \Vert_{0,\F} \lesssim \hF^{-1} \Vert \psiboldh   \Vert_{-1,\F}  \quad \quad \forall \psiboldh \in \Vzface(\F).
\end{equation}
Moreover, the following inverse estimates in 3D face virtual element spaces is valid as well: for all elements~$\E \in \taun$,
\begin{equation} \label{inverse:L2-H-1:3D-face}
\Vert \psiboldh \Vert_{0,\E} \lesssim \hE^{-1} \Vert \psiboldh   \Vert_{-1,\E}  \quad \quad \forall \psiboldh \in \Vzface(\E).
\end{equation}
\end{lem}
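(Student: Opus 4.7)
The plan is to prove both inverse estimates by a duality argument whose engine is a single structural fact not yet exploited in the paper: every $\psiboldh \in \VzfaceE$ (respectively~$\VzfaceF$) is componentwise harmonic. Indeed, the definition of~$\VzfaceE$ forces $\div \psiboldh \in \Pbb_0(\E)$ and $\curlbold \psiboldh \in [\Pbb_0(\E)]^3$, so the vector identity $-\Delta \psiboldh = \curlbold \, \curlbold \psiboldh - \nabla \div \psiboldh$ yields $\Delta \psiboldh = 0$ in $\E$; the analogous fact in 2D follows from $\DeltaF \psiboldh = \nablaF \divF \psiboldh - \curlboldF \rotF \psiboldh$ and the definition of $\VzfaceF$.

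To prove~\eqref{inverse:L2-H-1:3D-face}, I would exploit the dual characterisation of $\Vert \cdot \Vert_{-1,\E}$ and build a test function $\Phibold \in [H^1_0(\E)]^3$ satisfying simultaneously
\[
(\psiboldh, \Phibold)_{0, \E} \gtrsim \Vert \psiboldh \Vert_{0, \E}^2
\qquad\text{and}\qquad
|\Phibold|_{1, \E} \lesssim \hE^{-1} \Vert \psiboldh \Vert_{0, \E};
\]
the combination of the two bounds, divided through by $\Vert \psiboldh \Vert_{0,\E}$, is precisely~\eqref{inverse:L2-H-1:3D-face}. The natural candidate is $\Phibold := \bE \psiboldh$, with $\bE$ the piecewise quartic bubble on the shape-regular sub-tessellation $\tautildenE$ introduced in the proof of Lemma~\ref{lemma:polynomial-inverse}: $\bE$ vanishes on $\partial \E$ and across every internal face of $\tautildenE$, so that $\Phibold \in [H^1_0(\E)]^3$.

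The $H^1$-seminorm estimate is handled sub-tetrahedron by sub-tetrahedron. On each $T \in \tautildenE$ the Leibniz rule gives $|\bE \psiboldh|_{1,T}^2 \lesssim \int_T |\nabla \bE|^2 |\psiboldh|^2 + \int_T \bE^2 |\nabla \psiboldh|^2$. The first summand is controlled via $|\nabla \bE| \lesssim h_T^{-1}$, which is a polynomial inverse estimate on $\bE$. The second is controlled by a Caccioppoli inequality with weight $\bE$ (multiply $-\Delta \psiboldh = 0$ by $\bE^2 \psiboldh$ and integrate by parts, using $\bE|_{\partial T} = 0$), which yields $\int_T \bE^2 |\nabla \psiboldh|^2 \lesssim \int_T |\nabla \bE|^2 |\psiboldh|^2 \lesssim h_T^{-2} \Vert \psiboldh \Vert_{0,T}^2$; this is the step where harmonicity is essential. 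Summing over $T$ and invoking the mesh regularity gives the desired upper bound.

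The hard step, and the real obstacle, is the weighted lower bound $\int_\E \bE |\psiboldh|^2 \gtrsim \Vert \psiboldh \Vert_{0, \E}^2$: for an arbitrary harmonic field this fails because the $L^2$ mass can concentrate in the boundary layer where $\bE \approx 0$, and one must genuinely use that $\psiboldh$ lies in the finite-dimensional space $\VzfaceE$. This is the ``split and re-assemble'' step announced in the introduction. My plan is to decompose $\psiboldh = \Piboldz \psiboldh + (\psiboldh - \Piboldz \psiboldh)$: the constant part contributes a fixed positive fraction of the bubble-weighted $L^2$-mass (since $\int_\E \bE \approx |\E|$ by shape-regularity), while the zero-mean harmonic remainder is re-expressed through Proposition~\ref{prop:splitting:VEM-face}, bounding its $L^2$-norm by the normal degrees of freedom on $\partial \E$ and then by the bubble-weighted integral via a Poincar\'e-Wirtinger--type argument uniform in the mesh. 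The 2D estimate~\eqref{inverse:L2-H-1:2D-face} follows along identical lines, with $\tautildenE$ replaced by the sub-triangulation $\FcaltildenF$ of $\F$ and $\bE$ replaced by the piecewise cubic bubble $\bF$ from the proof of Lemma~\ref{lemma:polynomial-inverse}.
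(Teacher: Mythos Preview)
Your approach is genuinely different from the paper's, and the harmonicity observation is a nice one that the paper does not exploit. However, the argument has a real gap precisely at the point you flag as ``the real obstacle'': the weighted lower bound $\int_\E \bE |\psiboldh|^2 \gtrsim \Vert \psiboldh\Vert_{0,\E}^2$. Your proposed resolution does not close. Writing $\psiboldh = c + r$ with $c=\Piboldz\psiboldh$, you correctly get $\int_\E \bE |c|^2 \approx \Vert c\Vert_{0,\E}^2$ and, via Proposition~\ref{prop:splitting:VEM-face}, $\Vert r\Vert_{0,\E}^2 \lesssim \hE \Vert r\cdot\nE\Vert_{0,\partial\E}^2$. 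But the last step---``bound the normal degrees of freedom by the bubble-weighted integral via a Poincar\'e--Wirtinger-type argument''---has no content as stated: the quantity $\hE\Vert r\cdot\nE\Vert_{0,\partial\E}^2$ lives on $\partial\E$, exactly where $\bE$ vanishes, and there is no mechanism linking it back to $\int_\E \bE|\psiboldh|^2$. Moreover, the cross term $2\int_\E \bE\, c\cdot r$ is not controlled (the constant $c$ and the remainder $r$ are $L^2$-orthogonal, not $\bE$-weighted-orthogonal), so even the constant part cannot be cleanly extracted. The underlying difficulty is real: for general harmonic functions the bubble lower bound is \emph{false} (spherical-harmonic-type modes concentrate near $\partial\E$), so any valid argument must use the finite-dimensional structure of $\VzfaceE$ in a way that is uniform across the shape-regular family---and you have not supplied one. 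A secondary issue: the Caccioppoli step $\int_T \bE^2|\nabla\psiboldh|^2 \lesssim h_T^{-2}\Vert\psiboldh\Vert_{0,T}^2$ tacitly assumes $\psiboldh\in H^1(T)$, which is not guaranteed on sub-tetrahedra touching edges of $\partial\E$ (elliptic regularity on polyhedra gives only $H^{1/2+\varepsilon}$ in general); this is likely repairable by a limiting argument exploiting that $\bE$ vanishes linearly at $\partial T$, but it needs to be said.

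For comparison, the paper avoids the weighted lower bound altogether. It splits $\psiboldh = \nablaF\Psi + \qboldo$ with $\qboldo$ a linear polynomial absorbing the (constant) rotor, so that the non-polynomial part is a \emph{gradient} of a function with constant Laplacian and piecewise polynomial Neumann data. For that gradient part the inverse estimate $\Vert\nablaF\Psi\Vert_{0,\F}\lesssim\hF^{-1}\Vert\nablaF\Psi\Vert_{-1,\F}$ is obtained by integration by parts, the trace inverse inequality~\eqref{L2-H1/2}, and the Stokes inf--sup; for $\qboldo$ it is a polynomial inverse estimate. The two pieces are then ``re-assembled'' in the $H^{-1}$ norm by an explicit test function $\zbold+\alpha\,\curlboldF w$ and the orthogonality $(\nablaF\Psi,\curlboldF w)_{0,\F}=0$. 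This is the ``split and re-assemble'' alluded to in the introduction---a decomposition in the \emph{target} dual norm, not a bubble-weighting of the $L^2$ norm.
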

\begin{proof}
Since the proof of the two bounds is essentially identical, we focus on the first one only. Therefore, let~$\psiboldh \in \Vzface(\F)$; we recall that~$\rotF \psiboldh \in \Pbb_0(\F)$.

We start by introducing a vector polynomial~$\qboldo \in [\Pbb_1(\F)]^2$ such that~$\rotF(\psiboldh - \qboldo) = 0$,
where~$\qboldo$ is given in the local coordinates~$(x,y)$ of~$\F$ by
\[
\qboldo = \beta (\yF-y, x-\xF),\quad \quad \beta \in \Rbb.
\]
Then, we define a function~$\Psi \in H^1(\E) \setminus \Rbb$ as
\[
\begin{cases}
\DeltaF \Psi = \divF(\psiboldh-\qboldo) 					& \text{in } \F\\
\nablaF \Psi \cdot \npartialF = (\psiboldh-\qboldo) \cdot \npartialF 	& \text{on } \partial \F.
\end{cases}
\]
It is easy to check that
\begin{equation} \label{equation:sstar}
\psiboldh - \qboldo = \nablaF \Psi,
\end{equation}
since the functions on the left- and right-hand sides of~\eqref{equation:sstar} have the same divergence, rotor, and boundary normal components.
As a first step, we prove the following inverse estimate:
\begin{equation} \label{first-inverse-big-lemma}
\Vert \nablaF \Psi \Vert_{0,\F} \lesssim \hF^{-1} \Vert \nablaF \Psi \Vert_{-1,\F}.
\end{equation}
Since~$\DeltaF \Psi \in \Pbb_0(\F)$, employing a polynomial inverse estimate yields the bound
\begin{equation} \label{inverse-estimate-constantLaplacian}
\Vert \DeltaF \Psi\Vert_{0,\F} \lesssim \hF^{-1} \Vert \DeltaF \Psi \Vert_{-1,\F} = \hF^{-1} \sup_{\Phi \in H^1_0(\F)} \frac{(\DeltaF \Psi, \Phi)_{0,\F}}{\vert \Phi \vert_{1,\F}}\le \hF^{-1} \Vert \nablaF \Psi \Vert_{0,\F}.
\end{equation}
In order to show~\eqref{first-inverse-big-lemma}, we first use an integration by parts and use equation \eqref{inverse-estimate-constantLaplacian}:
\[
\Vert \nablaF \Psi \Vert_{0,\F}^2 = -\int_\F \Psi \DeltaF\Psi  + \int_{\partial \F} (\npartialF \cdot \nablaF \Psi) \, \Psi \lesssim  \hF^{-1} \Vert\Psi\Vert_{0,\F} \Vert \nablaF \Psi \Vert_{0,\F} + \Vert \npartialF \cdot \nablaF \Psi \Vert_{0,\partial \F} \Vert \Psi \Vert_{0,\partial \F}.
\]
Recall that~$\npartialF \cdot \nablaF \Psi$ is a piecewise polynomial over~$\partial \F$ and apply the polynomial inverse inequality~\eqref{L2-H1/2} on the boundary of the face~$\F$. We deduce
\[
\Vert \nablaF \Psi \Vert_{0, \F}^2 \lesssim  \hF^{-1} \Vert\Psi\Vert_{0,\F} \Vert \nablaF \Psi \Vert_{0,\F} + \hE^{-\frac{1}{2}}  \Vert \npartialF \cdot \nablaF \Psi \Vert_{-\frac{1}{2},\partial \F} \Vert \Psi \Vert_{0,\partial \F}.
\]
Next, we apply the $\div$-trace inequality~\eqref{added:1} combined with the inverse estimate~\eqref{inverse-estimate-constantLaplacian}, and arrive at
\[
\Vert \nablaF \Psi \Vert_{0, \F}^2 \lesssim  \hF^{-1} \Vert\Psi\Vert_{0,\F} \Vert \nablaF \Psi \Vert_{0,\F} + \hE^{-\frac{1}{2}}  \Vert \nablaF \Psi \Vert_{0, \F} \Vert \Psi \Vert_{0,\partial \F}.
\]
Subsequently, we use the multiplicative trace inequality~\eqref{multiplicative:trace} and get
\[
\Vert \nablaF \Psi \Vert_{0, \F} \lesssim  \hF^{-1} \Vert\Psi\Vert_{0,\F}  +  \hF^{-\frac{1}{2}} \Vert\Psi\Vert_{0,\F}^{\frac{1}{2}} \Vert \nablaF \Psi \Vert_{0,\F}^{\frac{1}{2}} \quad \quad \Longrightarrow \quad  \Vert \nablaF \Psi \Vert_{0, \F} \lesssim  \hF^{-1} \Vert\Psi\Vert_{0,\F}.
\]
The inverse inequality~\eqref{first-inverse-big-lemma} follows from the above bound and the inf-sup property of the Stokes problem, see~\cite{BBF-book}:
\[
\Vert \Psi  \Vert_{0,\F} \lesssim \sup_{\vbold \in [H^1_0(\F)]^2} \frac{(\Psi, \divF \vbold)_{0,\F}}{\vert \vbold \vert_{1,\F}} 
= \sup_{\vbold \in [H^1_0(\F)]^2} \frac{(\nablaF \Psi, \vbold)_{0,\F}}{\vert \vbold \vert_{1,\F}} = \Vert \nablaF \Psi \Vert_{-1,\F}.
\]
Recalling~\eqref{equation:sstar} and~\eqref{first-inverse-big-lemma}, we obtain
\begin{equation} \label{blue:equation}
\Vert \psiboldh - \qboldo \Vert_{0,\F} \lesssim \hF^{-1} \Vert \psiboldh - \qboldo \Vert_{-1,\F}.
\end{equation}
With this at hand, we proceed with the proof of the assertion.
We apply the triangle inequality, the inverse estimate~\eqref{blue:equation}, and a polynomial inverse estimate to obtain
\begin{equation} \label{getting-assertion-1}
\begin{split}
\Vert \psiboldh \Vert_{0,\F}
& \le \Vert \psiboldh - \qboldo \Vert_{0,\F} + \Vert \qboldo \Vert_{0,\F} \lesssim 
  \hF^{-1} \Vert \psiboldh - \qboldo \Vert_{-1,\F} + \hF^{-1} \Vert \qboldo \Vert_{-1, \F}.
\end{split}
\end{equation}
We need to show that the two terms on the right-hand side of~\eqref{getting-assertion-1} can be somewhat re-assembled together in order to obtain~\eqref{inverse:L2-H-1:2D-face}.
To this aim, we proceed as follows.
Introduce the function~$\zbold \in [H^1_0(\F)]^2$ ``realizing'' the sup in the definition of~$\Vert \psiboldh - \qboldo \Vert_{-1,\F}$. In other words, for a positive constant~$c_1$, $\zbold$ is such that
\begin{equation} \label{-1estimate-1}
(\psiboldh - \qboldo, \zbold)_{0,\F} \ge c_1 \Vert \psiboldh - \qboldo \Vert_{-1,\F} , \quad \quad \vert \zbold \vert_{1,\F}=1.
\end{equation}
Additionally, define
\[
\w \in H^2_0(\F) := \left\{ \w \in H^2(\F) \mid \w_{|\partial \F}=0,\; \nabla \w_{|\partial \F}=\mathbf 0    \right\}
\]
as the square of the piecewise cubic bubble function over the regular sub-triangulation~$\FcaltildenF$ of the face~$\F$, see the proof of Lemma~\ref{lemma:polynomial-inverse},
multiplied by~$\text{sign}(\rotF \qboldo)$.
We scale~$\w$ so that~$\vert \curlboldF \w \vert_{1,\F} = 1$
and observe that $\curlboldF \w \in [H^1_0(\F)]^2$.

Using an integration by parts, the definition of~$\w$, and several polynomial inverse estimates involving~$\qboldo$ and~$\w$, we get the following bound:
\[
\begin{split}
(\qboldo, \curlboldF \w)_{0,\F} 
& =  (\rotF \qboldo, \w)_{0,\F} = \vert \rotF \qboldo \vert \Vert \w \Vert_{L^1(\F)} \\
& \gtrsim \hF^{-2} \Vert \qboldo \Vert_{0,\F} \hF^3 \vert \curlboldF \w \vert_{1,\F} = \hF \Vert \qboldo \Vert _{0,\F} .
\end{split}
\]
An additional polynomial inverse inequality gives, for a positive constant~$c_2$,
\begin{equation} \label{-1estimate-2}
(\qboldo, \curlboldF \w)_{0,\F} \ge c_2 \Vert \qbold \Vert_{-1,\F}.
\end{equation}
We further observe the orthogonality property
\begin{equation} \label{intermediate-orthogonality}
(\psiboldh - \qboldo, \curlboldF \w)_{0,\F} = (\nabla \Psi , \curlboldF \w)_{0,\F} =0 \quad \quad \forall \w \in H^2_0(\F).
\end{equation}
Therefore, we can write
\[
\begin{split}
\Vert \psiboldh \Vert_{-1,\F}
& := \sup_{\vbold \in [H^1_0(\F)]^2} \frac{(\psiboldh, \vbold)_{0,\F}}{\vert \vbold \vert_{1,\F}} = \sup_{\vbold \in [H^1_0(\F)]^2} \frac{(\psiboldh - \qboldo, \vbold)_{0,\F} + ( \qboldo, \vbold)_{0,\F}}{\vert \vbold \vert_{1,\F}}\\
& \ge \frac{(\psiboldh - \qboldo, \zbold +\alpha \curlboldF \w )_{0,\F} + ( \qboldo, \zbold +\alpha \curlboldF \w )_{0,\F}}{\vert \zbold +\alpha \curlboldF \w \vert_{1,\F}},
\end{split}
\]
where~$\alpha>0$ is a sufficiently large constant to be assigned later. 

Using~\eqref{-1estimate-1}, \eqref{-1estimate-2}, and~\eqref{intermediate-orthogonality}, we deduce:
\begin{equation} \label{getting-assertion-2}
\begin{split}
\Vert \psiboldh \Vert_{-1,\F} 
& \ge \frac{(\psiboldh - \qboldo, \zbold)_{0,\F} + (\qboldo, \zbold)_{0,\F} + \alpha(\qboldo, \curlboldF w)_{0,\F}}{1 + \alpha \vert \curlbold w \vert_{1,\F}}\\
& \ge \frac{c_1 \Vert \psiboldh - \qboldo \Vert_{-1,\F} - \Vert \qboldo \Vert_{-1,\F} + \alpha c_2 \Vert \qboldo \Vert_{-1,\F}}{1+\alpha} \\
& = \left(  \frac{c_1}{1+\alpha}  \right) \Vert \psiboldh - \qboldo \Vert_{-1,\F} + \left(  \frac{c_2\alpha-1}{1+\alpha}   \right) \Vert \qboldo \Vert_{-1,\E}.
\end{split}
\end{equation}
The assertion follows by taking~$\alpha = 2/c_2$ and combining~\eqref{getting-assertion-1} with~\eqref{getting-assertion-2}.
\end{proof}

\begin{cor} \label{corollary:inverse-estimate-edge}
The following inverse estimates in 2D edge virtual element spaces is valid: for all faces~$\F \in \Fcaln$,
\begin{equation} \label{inverse:L2-H-1:2D-edge-local}
\Vert \vboldh \Vert_{0,\F} \lesssim \hF^{-1} \Vert \vboldh   \Vert_{-1,\F}  \quad \quad \forall \vboldh \in \Vzedge(\F).
\end{equation}
Moreover, summing over the faces of~$\E$ and using interpolation theory, we also have that
\begin{equation} \label{inverse:L2-H-1:2D-edge}
\Vert \vboldpartialEh \Vert_{0,\partial \E} \lesssim \hE^{-\frac{1}{2}} \Vert \vboldpartialEh \Vert_{-1/2, \partial \E}  \quad \quad \forall \vboldh \in \VzedgeE,
\end{equation}
where~$\vboldpartialEh$ denotes the tangential component on~$\partial \E$ of the function~$\vboldh$.
\end{cor}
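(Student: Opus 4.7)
The plan is to reduce both bounds to the 2D face inverse estimate \eqref{inverse:L2-H-1:2D-face} from Lemma \ref{lemma:hard-inverse-estimate}, exploiting the duality between 2D face and edge virtual element spaces under a $90$-degree in-plane rotation for the first statement, and then passing from face-local to global via real interpolation for the second.

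For the face-local bound \eqref{inverse:L2-H-1:2D-edge-local}, given $\vboldh = (v_{h,1}, v_{h,2}) \in \VzedgeF$, I would introduce the rotated field $\psiboldh := (-v_{h,2}, v_{h,1})$ in the local coordinates of $\F$. A direct comparison of definitions \eqref{2d-face-space} and \eqref{2d-edge-space} shows that such rotation maps $\VzedgeF$ isometrically onto $\VzfaceF$: a short calculation yields $\divF \psiboldh = -\rotF \vboldh \in \Pbb_0(\F)$ and $\rotF \psiboldh = \divF \vboldh \in \Pbb_0(\F)$; rotation sends the edge tangent $\te$ to the edge normal $\ne$, so the constancy of $\psiboldh \cdot \ne$ on each edge follows from that of $\vboldh \cdot \te$; and the identity $\psiboldh \cdot \xboldF^\perp = \vboldh \cdot \xboldF$ turns the edge enhancing constraint $\int_\F \vboldh \cdot \xboldF = 0$ into the face enhancing constraint $\int_\F \psiboldh \cdot \xboldF^\perp = 0$. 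Since a Euclidean rotation is an isometry for both $L^2(\F)$ and $H^{-1}(\F)$, applying \eqref{inverse:L2-H-1:2D-face} to $\psiboldh$ yields \eqref{inverse:L2-H-1:2D-edge-local} immediately.

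For the global bound \eqref{inverse:L2-H-1:2D-edge}, I would combine \eqref{inverse:L2-H-1:2D-edge-local} with the trivial identity $\Vert \vboldhF \Vert_{0,\F} \le \Vert \vboldhF \Vert_{0,\F}$ via real interpolation between $L^2(\F)$ and $H^{-1}(\F)$. Viewing the identity map on the restriction of $\VzedgeE$ as a bounded operator with norm $1$ from $L^2(\F)$ to $L^2(\F)$ and with norm $\hF^{-1}$ from $H^{-1}(\F)$ to $L^2(\F)$, the $K$-method at level $\theta=1/2$ produces on each face the scaled estimate
\[
\Vert \vboldhF \Vert_{0,\F} \lesssim \hF^{-\frac{1}{2}} \Vert \vboldhF \Vert_{-\frac{1}{2},\F}.
\]
Squaring, summing over $\F \in \EE$, and using the mesh regularity $\hF \approx \hE$ then bounds $\Vert \vboldpartialEh \Vert_{0,\partial \E}^2$ by a sum of face-wise scaled negative norms, which I would finally dominate by the single global scaled norm $\Vert \vboldpartialEh \Vert_{-1/2,\partial \E}$ of \eqref{H-1/2:norm}.

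The main obstacle is precisely this last assembly: since the broken $H^{-1/2}$ norm is in general larger than the global one on $\partial \E$, a naive summation of the local inequalities gives control by the broken norm, which is weaker than the desired bound. Bridging this gap requires exploiting the structure of the $H^{1/2}(\partial \E)$ test functions entering \eqref{H-1/2:norm}, for instance by splitting them face by face via a partition of unity on $\partial \E$ compatible with mesh regularity and checking that the scaled local norms of the pieces are controlled by the global scaled norm on $\partial \E$; alternatively, one could avoid the face-wise $H^{-1/2}$ estimate altogether and perform the interpolation argument globally on $\partial \E$ directly, provided one first establishes the analogous estimate $\Vert \vboldpartialEh \Vert_{0,\partial \E} \lesssim \hE^{-1} \Vert \vboldpartialEh \Vert_{-1,\partial \E}$ with a genuine global $H^{-1}(\partial \E)$-norm, which in turn can be recovered from \eqref{inverse:L2-H-1:2D-edge-local} by duality using that $H^1(\partial\E)$ test functions restrict to $H^1(\F)$ on each face.
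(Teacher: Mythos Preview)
Your rotation argument for \eqref{inverse:L2-H-1:2D-edge-local} is exactly the paper's proof. For \eqref{inverse:L2-H-1:2D-edge}, your \emph{alternative} route---first assemble the face-wise bounds into a global $L^2$--$H^{-1}(\partial\E)$ inverse estimate and only then interpolate on $\partial\E$---is precisely what the paper does; your primary route (interpolate on each face, then sum) runs into the broken-vs-global $H^{-1/2}$ issue you yourself flag, and the paper does not attempt it.

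One small correction in your alternative: the passage $\sum_{\F}\hF^{-1}\Vert\vboldhF\Vert_{-1,\F}\lesssim\hE^{-1}\Vert\vboldpartialEh\Vert_{-1,\partial\E}$ goes by \emph{extending} each $H^1_0(\F)$ test function by zero to $\partial\E$ (this preserves the $H^1$ seminorm since the extension stays in $H^1(\partial\E)$), not by restricting $H^1(\partial\E)$ test functions to faces. Restriction gives the opposite inequality, which is not what you need here.
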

\begin{proof}
The proof follows from the ``rotated'' version of~\eqref{inverse:L2-H-1:2D-face}, since in 2D the space $\Vzedge(\F)$ can be obtained by a $90^\circ$ rotation of the space~$\Vzface(\F)$, c.f. \eqref{2d-face-space} and \eqref{2d-edge-space}.
In other words, every function~$\vboldh$ in~$\Vzedge(\F)$ can be written as ${\cal R}_{\pi/2} \psiboldh$, with~$\psiboldh$ in~$\Vzface(\F)$
and the operator~${\cal R}_{\pi/2}$ denoting a rotation of~$90^\circ$ for two-component vector fields living on $F$.

We briefly comment on the proof of~\eqref{inverse:L2-H-1:2D-edge}.
Using the definition of the $H^{-1}(\F)$ norm and~\eqref{inverse:L2-H-1:2D-edge-local},  
\[
\Vert \vboldpartialEh \Vert_{0,\partial\E}
\lesssim \sum_{\F \in \EF} \Vert \vboldpartialEh{}\Vert_{0,\F}
\lesssim \sum_{\F \in \EF} \hF^{-1} \Vert \vboldpartialEh{}\Vert_{-1,\F}
\lesssim \hE^{-1} \Vert \vboldpartialEh \Vert_{-1,\partial\E}.
\]
The hidden constant depends on the number of faces of the element~$\E$.
The assertion follows by using interpolation theory.
\end{proof}

We are in the position of proving the stability result for the stabilization in~\eqref{stabilization:edge}.
\begin{prop} \label{proposition:stab-edge}
Given~$\SEedge$ the stabilization defined in~\eqref{stabilization:edge}, the following bounds are valid:
\begin{equation} \label{stability-bounds-edges}
\beta_*\Vert  \vboldh \Vert^2_{0,\E} \le   \SEedge(\vboldh, \vboldh)   \le   \beta^* \Vert  \vboldh \Vert^2_{0,\E}     \quad \quad \forall \vboldh \in \VzedgeE,
\end{equation}
where~$\beta_*$ and~$\beta^*$ are two positive constants independent of~$\hE$.
\end{prop}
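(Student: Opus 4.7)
My plan is the following. The lower bound in~\eqref{stability-bounds-edges} is immediate: the a priori estimate~\eqref{apriori-3Dedge-full} is precisely what is needed, since $\vboldh \cdot \te$ is constant on each edge so that $\hE^2 \sum_{\F \in \EE} \sum_{\e \in \EF} \Vert \vboldh \cdot \te\Vert^2_{0,\e} = \SEedge(\vboldh, \vboldh)$ is exactly the right-hand side of~\eqref{apriori-3Dedge-full}.

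For the upper bound, the strategy is to push the boundary stabilization into element quantities through a chain of trace-type and inverse-type inequalities, and then control the resulting $\hE^2 \Vert \curlbold \vboldh \Vert_{0,\E}^2$ by $\Vert \vboldh \Vert_{0,\E}^2$ via an inverse estimate that is the crux of the argument. First, I would rewrite $\SEedge(\vboldh, \vboldh) = \hE^2 \sum_{\F \in \EE} \Vert \vboldh \cdot \tpartialF \Vert_{0,\partial \F}^2$; then, face by face, I would apply the 1D polynomial inverse estimate~\eqref{inverse1D} to pass from $L^2(\partial \F)$ to $H^{-1/2}(\partial \F)$, followed by the rot-trace~\eqref{rot-trace}, obtaining
\[
\Vert \vboldh \cdot \tpartialF \Vert^2_{0,\partial \F} \lesssim \hE^{-1}\bigl(\Vert \vboldhF \Vert^2_{0,\F} + \hE^2\Vert \rotF \vboldhF \Vert^2_{0,\F}\bigr).
\]
Summing over $\F \in \EE$ and using the identity $\rotF \vboldhF = \curlbold \vboldh \cdot \nF$, this yields
\[
\SEedge(\vboldh, \vboldh) \lesssim \hE \Vert \vboldpartialEh \Vert_{0,\partial \E}^2 + \hE^3 \Vert \curlbold \vboldh \cdot \nE \Vert_{0,\partial \E}^2.
\]

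I would then bound each of the two boundary terms by interior $L^2$ quantities. For the first, Corollary~\ref{corollary:inverse-estimate-edge} combined with the $\curlbold$-trace inequality~\eqref{curl-trace} (up to the $90^\circ$ rotation identifying $\vboldpartialEh$ with $\vboldh \times \nE$ face-by-face) gives $\hE \Vert \vboldpartialEh \Vert^2_{0,\partial \E} \lesssim \Vert \vboldh \Vert^2_{0,\E} + \hE^2 \Vert \curlbold \vboldh \Vert^2_{0,\E}$. For the second, the exact sequence property~\eqref{curlbold-edge=face} places $\curlbold \vboldh$ in $\VzfaceE$, and thus the already-established upper bound for the face stabilization (Proposition~\ref{proposition:stab-face}) gives $\hE \Vert \curlbold \vboldh \cdot \nE \Vert^2_{0,\partial \E} = \SEface(\curlbold \vboldh, \curlbold \vboldh) \lesssim \Vert \curlbold \vboldh \Vert_{0,\E}^2$. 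Collecting the two contributions produces $\SEedge(\vboldh, \vboldh) \lesssim \Vert \vboldh \Vert_{0,\E}^2 + \hE^2 \Vert \curlbold \vboldh \Vert_{0,\E}^2$.

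The main obstacle is the final step: absorbing the $\hE^2 \Vert \curlbold \vboldh \Vert_{0,\E}^2$ term into $\Vert \vboldh \Vert_{0,\E}^2$. This requires the non-routine inverse estimate $\hE \Vert \curlbold \vboldh \Vert_{0,\E} \lesssim \Vert \vboldh \Vert_{0,\E}$, which is unavailable by elementary means because $\vboldh$ is virtual rather than polynomial. I would establish it by combining Lemma~\ref{lemma:hard-inverse-estimate} applied to $\curlbold \vboldh \in \VzfaceE$, which yields $\Vert \curlbold \vboldh \Vert_{0,\E} \lesssim \hE^{-1} \Vert \curlbold \vboldh \Vert_{-1,\E}$, with a duality argument: for any $\Phibold \in [H^1_0(\E)]^3$, integration by parts gives $(\curlbold \vboldh, \Phibold)_{0,\E} = (\vboldh, \curlbold \Phibold)_{0,\E}$, whence $\Vert \curlbold \vboldh \Vert_{-1,\E} \lesssim \Vert \vboldh \Vert_{0,\E}$. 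The invocation of the hard-won Lemma~\ref{lemma:hard-inverse-estimate} is precisely what breaks the circularity that would otherwise plague the chain of trace inequalities, and with this inverse estimate in hand the upper bound in~\eqref{stability-bounds-edges} follows at once.
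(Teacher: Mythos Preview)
Your proposal is correct and follows essentially the same route as the paper's proof: the lower bound via~\eqref{apriori-3Dedge-full}, then the chain \eqref{inverse1D}$\to$\eqref{rot-trace}$\to$Corollary~\ref{corollary:inverse-estimate-edge}$\to$\eqref{curl-trace} to reach $\SEedge(\vboldh,\vboldh)\lesssim\Vert\vboldh\Vert_{0,\E}^2+\hE^2\Vert\curlbold\vboldh\Vert_{0,\E}^2$, and finally Lemma~\ref{lemma:hard-inverse-estimate} plus duality to absorb the curl term. The only cosmetic difference is that the paper disposes of the $\hE^3\sum_\F\Vert\rotF\vboldhF\Vert_{0,\F}^2$ term by the elementary bubble-type inverse estimate $\Vert\rotF\vboldhF\Vert_{0,\F}\lesssim\hE^{-1}\Vert\vboldhF\Vert_{0,\F}$ (absorbing it into the first boundary term), whereas you invoke the upper bound of Proposition~\ref{proposition:stab-face}; both lead to the same intermediate estimate.
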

\begin{proof}
The lower bound in~\eqref{stability-bounds-edges} follows directly from~\eqref{apriori-3Dedge-full}.

As for the upper bound in~\eqref{stability-bounds-edges}, we apply the polynomial inverse inequality~\eqref{inverse1D} and the 2D rot-trace inequality~\eqref{rot-trace}, and arrive at
\[
\begin{split}
 \SEedge(\vboldh, \vboldh) 
&= \hE^{2} \sum_{\F \in \EE} \sum_{\e \in \EF} \Vert  \vboldh \cdot \te   \Vert_{0, \e}^2  = \hE^{-2}\sum_{\F \in \EE} \Vert \vboldh \cdot \te \Vert^2_{0,\partial \F} \lesssim \hE \sum_{\F \in \EE} \Vert \vboldh \cdot \te \Vert^2_{-\frac{1}{2},\partial \F} \\
& \lesssim \hE \sum_{\F \in \EE} \Vert \vboldhF \Vert ^2_{0, \F} + \hE^3 \sum_{\F \in \EE} \Vert \rotF \vboldhF \Vert ^2_{0,\F}.
\end{split}
\]
As for the second term on the right-hand side we know that~$\rotF \vboldhF$ is constant over~$\F$ for all faces~$\F \in \EE$.
Therefore, using tools analogous to those employed to prove inverse estimates~\eqref{3D:inverse-curl}, \eqref{2D:inverse-Laplacian}, and~\eqref{inverse-rot}, we have
\[
\Vert \rotF \vboldhF \Vert_{0,\F} \lesssim \hE^{-1} \Vert \vboldhF \Vert_{0,\F}.
\]
Upon combining the two above bounds, we can write
\[
\SEedge(\vboldh, \vboldh)    \lesssim \hE \Vert \vboldhpartialE \Vert ^2_{0,\partial \E}.
\]
We now employ the inverse estimate~\eqref{inverse:L2-H-1:2D-edge} and the $\curl$-trace inequality~\eqref{curl-trace} to arrive at
\[
\SEedge(\vboldh, \vboldh)  \lesssim \Vert \vboldhpartialE \Vert ^2_{-\frac{1}{2},\partial \E} = \Vert \vboldh \times \nE \Vert ^2_{-\frac{1}{2},\partial \E} \lesssim \Vert \vboldh \Vert_{0,\E}^2 + \hE^{2} \Vert \curlbold \vboldh \Vert^2_{0,\E}.
\]
Recalling that~$\curlbold \vboldh \in \VzfaceE$, we apply the inverse estimate~\eqref{inverse:L2-H-1:3D-face}, integrate by parts in the dual norm definition, and deduce
\[
\begin{split}
\SEedge(\vboldh, \vboldh)   
& \lesssim \Vert \vboldh \Vert_{0,\E}^2 +  \Vert \curlbold \vboldh \Vert^2_{-1,\E} \\
& = \Vert \vboldh \Vert_{0,\E}^2 + \sup_{\Phibold \in [H^1_0(\Omega)]^2} \frac{(\curlbold \vboldh, \Phibold)_{0,\Omega}}{\vert \Phibold \vert_{1,\Omega}}  \lesssim \Vert \vboldh \Vert_{0,\E}^2  .
\end{split}
\]
\end{proof}


\begin{appendices}
\section{Interpolation and stability properties in 2D face and edge virtual element spaces} \label{appendix:interpolation-properties-2D}
Using tools analogous to those developed in Sections~\ref{section:faceVEM-loworder}--\ref{section:stabilizations} for the 3D case, we can easily derive also the analogous 2D counterparts.
For completeness, we state the interpolation properties of functions in 2D face and edge virtual element spaces.
Recall that such spaces are defined in~\eqref{2d-face-space} and~\eqref{2d-edge-space}, respectively.

\begin{thm} \label{theorem:interpolation-2D}
For any generic polygon~$\F$ of a regular 2D regular mesh in the sense of Section~\ref{section:preliminaries},
let~$\psibold$ and~$\vbold \in [H^s(\F)]^2$, $1/2 < s \le 1$. Then, there exist~$\psiboldI \in \VzfaceF$ and~$\vboldI \in \VzedgeF$ such that
\[
\Vert \psibold - \psiboldI \Vert_{0,\F} \lesssim  \hE^s \vert  \psibold \vert_{s,\F}, \quad\quad\quad \Vert \vbold - \vboldI \Vert_{0,\F} \lesssim  \hE^s \vert  \vbold \vert_{s,\F}.
\]
\end{thm}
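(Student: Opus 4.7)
The plan is to mimic the argument of Proposition~\ref{prop:best-interpolation-face} in the 2D setting, once for each of the two spaces. Everything reduces to a 2D a priori bound on the degrees of freedom (analogous to Proposition~\ref{prop:splitting:VEM-face}), combined with standard polynomial approximation and the zero-average trace inequality~\eqref{Agmon:zero-average} applied in a polygon.

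First, I would secure the required a priori bounds. For the edge space, Proposition~\ref{prop:splitting:VEM-2Dedge} already delivers $\Vert \vboldh \Vert_{0,\F} \lesssim \hF^{1/2} \Vert \vboldh \cdot \tpartialF \Vert_{0,\partial \F}$ for all $\vboldh \in \VzedgeF$. For the face space, I would invoke the $90^\circ$-rotation identification between $\VzfaceF$ and $\VzedgeF$ used in Corollary~\ref{corollary:inverse-estimate-edge}: the rotation maps the tangential trace to the (signed) normal trace and swaps the roles of $\xboldF$ and $\xboldF^\perp$ in the enhancing constraint, so the companion bound $\Vert \psiboldh \Vert_{0,\F} \lesssim \hF^{1/2} \Vert \psiboldh \cdot \npartialF \Vert_{0,\partial \F}$ for $\psiboldh \in \VzfaceF$ follows at no cost. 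Next, I would define the interpolants through the degrees of freedom, $\int_\e (\psibold - \psiboldI) \cdot \ne = 0$ and $\int_\e (\vbold - \vboldI) \cdot \te = 0$ for every $\e \in \EF$, both well-posed under the regularity $s > 1/2$. Letting $\psiboldpi, \vboldpi \in [\Pbb_0(\F)]^2$ be the componentwise averages over $\F$, I observe that constants belong to both spaces: the enhancing constraints $\int_\F \cdot\, \xboldF^\perp = 0$ and $\int_\F \cdot\, \xboldF = 0$ are satisfied automatically since $\xboldF$ is centered at the barycenter. Hence $\psiboldI - \psiboldpi \in \VzfaceF$ and $\vboldI - \vboldpi \in \VzedgeF$.

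Focusing on the edge estimate (the face estimate being symmetric), a triangle inequality yields $\Vert \vbold - \vboldI \Vert_{0,\F} \le \Vert \vbold - \vboldpi \Vert_{0,\F} + \Vert \vboldpi - \vboldI \Vert_{0,\F}$. The first summand is bounded by $\hF^s |\vbold|_{s,\F}$ via Poincar\'e and interpolation. For the second, I would apply the a priori bound from above, then use that $\vboldI \cdot \te$ equals the edge-average of $\vbold \cdot \te$, so that $(\vboldI - \vboldpi) \cdot \te$ is the edge-average of $(\vbold - \vboldpi) \cdot \te$; a Cauchy-Schwarz edge-by-edge gives $\Vert (\vboldpi - \vboldI) \cdot \tpartialF \Vert_{0,\partial \F} \le \Vert \vbold - \vboldpi \Vert_{0,\partial \F}$, and the 2D analog of~\eqref{Agmon:zero-average} (applied componentwise to the zero-average quantity $\vbold - \vboldpi$) yields the bound $\lesssim \hF^{s-1/2} |\vbold|_{s,\F}$. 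Multiplying by the $\hF^{1/2}$ prefactor closes the estimate. The same pipeline with $\tpartialF$ replaced by $\npartialF$ and $\te$ by $\ne$ handles the face case.

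The main obstacle is not technical but notational: one must verify that the $90^\circ$-rotation used in the first step really identifies $\VzfaceF$ with $\VzedgeF$ with all their defining properties, including the correct interchange of $\divF$ and $\rotF$ and of the two enhancing constraints. This identification is essentially the content of Corollary~\ref{corollary:inverse-estimate-edge}, so once it is spelled out the 2D proof is purely routine, which justifies relegating it to the appendix.
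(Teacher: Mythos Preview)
Your proposal is correct and follows essentially the same route as the paper: the a priori bound on the degrees of freedom (Proposition~\ref{prop:splitting:VEM-2Dedge} for one space, its $90^\circ$-rotated version for the other) combined with subtraction of the constant average, a triangle inequality, and the zero-average trace inequality~\eqref{Agmon:zero-average}, exactly as in Proposition~\ref{prop:best-interpolation-face}. The only cosmetic difference is that the paper phrases it as ``prove the face bound as in Proposition~\ref{prop:best-interpolation-face}, then rotate to get the edge bound,'' whereas you start from the edge a priori bound (already stated as Proposition~\ref{prop:splitting:VEM-2Dedge}) and rotate to get the face one; since the rotation is an isometric bijection between the two spaces, the two orderings are equivalent.
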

\begin{proof}
We can prove the first bound as in Proposition~\ref{prop:best-interpolation-face}. The second bound is the ``rotated'' version of the first one.
\end{proof}
We also inherit the 2D counterpart of the stabilization properties detailed in Section~\ref{section:stabilizations}.
More precisely, for all polygons~$\F$ of the mesh, introduce the bilinear forms~$\SEface: \VzfaceF \times \VzfaceF \rightarrow \mathbb R$
\begin{equation} \label{stabilization-2D-face}
\SFface(\psiboldh, \phiboldh) : = \hE \sum_{\e \in \EF} (\psiboldh \cdot \ne, \phiboldh \cdot \ne) _{0,\e}
\end{equation}
and~$\SFedge: \VzedgeF \times \VzedgeF \rightarrow \mathbb R$
\begin{equation} \label{stabilization-2D-edge}
\SFedge(\vboldh, \wboldh) : = \hE \sum_{\e \in \EF} (\vboldh \cdot \te, \wboldh \cdot \te) _{0,\e}.
\end{equation}
\begin{thm} \label{theorem:stability-2D}
For any generic polygon~$\F$ of a regular 2D regular mesh in the sense of Section~\ref{section:preliminaries},
let~$\SFface(\cdot, \cdot)$ be the stabilization defined in~\eqref{stabilization-2D-face}. Then, the following bounds are valid:
\[
\alpha_*\Vert  \psiboldh  \Vert^2_{0,\F} \le   \SFface(\psiboldh, \psiboldh)   \le   \alpha^* \Vert  \psiboldh  \Vert^2_{0,\F}     \quad \quad \forall \psiboldh \in \VzfaceF,
\]
where~$\alpha_*$ and~$\alpha^*$ are two positive constants independent of~$\hE$.

Further, given~$\SFedge(\cdot,\cdot)$ the stabilization defined in~\eqref{stabilization-2D-edge}, the following bounds are valid:
\[
\beta_*\Vert  \vboldh \Vert^2_{0,\F} \le   \SFedge(\vboldh, \vboldh)   \le   \beta^* \Vert  \vboldh \Vert^2_{0,\F}     \quad \quad \forall \vboldh \in \VzedgeF,
\]
where~$\beta_*$ and~$\beta^*$ are two positive constants independent of~$\hE$.
\end{thm}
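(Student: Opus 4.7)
My plan is to mirror Propositions~\ref{proposition:stab-face} and~\ref{proposition:stab-edge}: the 2D face and edge spaces differ from their 3D counterparts only in dimension, and moreover $\VzfaceF$ and $\VzedgeF$ are related by a $\pi/2$ rotation, as already exploited in Corollary~\ref{corollary:inverse-estimate-edge}. I would therefore prove the $\SFface$ bounds first and transport them to $\SFedge$ by rotation.

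For the lower bound on $\SFface$, I would first establish the 2D a priori estimate
\[
\Vert \psiboldh \Vert_{0,\F} \lesssim \hF^{1/2}\Vert \psiboldh \cdot \npartialF \Vert_{0,\partial \F} \qquad \forall \psiboldh \in \VzfaceF.
\]
This is the ``rotated'' counterpart of Proposition~\ref{prop:splitting:VEM-2Dedge}: applying ${\cal R}_{\pi/2}$ to the identity \eqref{a-priori:bound:vboldh2D} turns tangential boundary components into normal ones. Since $\psiboldh\cdot\ne \in \Pbb_0(\e)$, the quantity $\hF\Vert\psiboldh\cdot\npartialF\Vert^2_{0,\partial \F}$ coincides with $\SFface(\psiboldh, \psiboldh)$ up to constants, and squaring yields $\Vert \psiboldh \Vert_{0,\F}^2 \lesssim \SFface(\psiboldh, \psiboldh)$.

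For the upper bound on $\SFface$ I would replay the proof of Proposition~\ref{proposition:stab-face} in one dimension lower. Using the polynomial inverse estimate \eqref{L2-H1/2} (which by the last sentence of Lemma~\ref{lemma:polynomial-inverse} applies on faces too), the $\divF$-trace inequality \eqref{added:1}, and a bubble-function inverse estimate $\Vert \divF \psiboldh\Vert_{0,\F} \lesssim \hF^{-1}\Vert \psiboldh\Vert_{0,\F}$ valid because $\divF \psiboldh \in \Pbb_0(\F)$ (obtained exactly as for \eqref{3D:inverse-curl}, \eqref{2D:inverse-Laplacian} and \eqref{inverse-rot}), I would chain
\[
\Vert \psiboldh \cdot \npartialF \Vert_{0,\partial \F} \lesssim \hF^{-1/2}\Vert \psiboldh \cdot \npartialF\Vert_{-1/2,\partial \F} \lesssim \hF^{-1/2}\Vert \psiboldh\Vert_{0,\F} + \hF^{1/2}\Vert \divF \psiboldh \Vert_{0,\F} \lesssim \hF^{-1/2}\Vert \psiboldh\Vert_{0,\F},
\]
and multiply by $\hF^{1/2}$ to obtain $\SFface(\psiboldh, \psiboldh) \lesssim \Vert \psiboldh \Vert_{0,\F}^2$.

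The edge bounds then follow at once by the rotation ${\cal R}_{\pi/2}$: writing $\vboldh = {\cal R}_{-\pi/2}\psiboldh$ with $\psiboldh \in \VzfaceF$, one has $|\vboldh \cdot \te| = |\psiboldh \cdot \ne|$ on each edge and $\Vert \vboldh\Vert_{0,\F} = \Vert \psiboldh\Vert_{0,\F}$, whence $\SFedge(\vboldh, \vboldh) = \SFface(\psiboldh, \psiboldh)$ and both inequalities transfer. Alternatively, the upper bound can be proved directly by exchanging the roles of $\divF$ and $\rotF$ and invoking the $\rotF$-trace inequality \eqref{rot-trace} instead of \eqref{added:1}. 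I do not anticipate any real obstacle: every trace and polynomial inverse estimate required is already in place from Section~\ref{section:preliminaries}, and the only mildly non-routine ingredient, the a priori bound on $\VzfaceF$, is dispatched by rotating Proposition~\ref{prop:splitting:VEM-2Dedge}.
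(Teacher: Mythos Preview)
Your proposal is correct and matches the paper's approach: the paper simply states that the face bounds are proved ``as in Proposition~\ref{proposition:stab-face}'' and that the edge bounds are the rotated versions, which is precisely what you spell out. Your only extra detail is making explicit that the 2D face a~priori estimate needed for the lower bound is obtained by rotating Proposition~\ref{prop:splitting:VEM-2Dedge}; this is the natural 2D substitute for the reference to~\eqref{a-priori:bound:psiboldh} in the 3D argument, and is fully in line with the paper's intent.
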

\begin{proof}
We can prove the first bounds as in Proposition~\ref{proposition:stab-face}. Those for edge elements are the ``rotated'' versions of the first ones.
\end{proof}
\end{appendices}

\medskip
\begin{center}
{\bf Acknowledgements}
\end{center}
\smallskip\noindent

L. Beir\~ao da Veiga was partially supported by the European Research Council through the H2020 Consolidator Grant (grant no. 681162) CAVE, ``Challenges and Advancements in Virtual Elements'', and the italian PRIN 2017 grant ``Virtual Element Methods: Analysis and Applications''. Both these supports are gratefully acknowledged.
L. Mascotto acknowledges support from the Austrian Science Fund (FWF) project P33477.
The authors are grateful to Gianni Gilardi for an interesting email exchange regarding Lemma \ref{lemma:L1-boundary-estimate}
and Remark~\ref{remark:curl-interpolation}.

{\footnotesize
\bibliography{bibliogr}
}
\bibliographystyle{plain}

\end{document}